\documentclass[10pt]{article}
\usepackage[english]{babel}
\usepackage{amssymb}
\usepackage{amsfonts}
\usepackage{amsmath}
\usepackage{amsthm,thmtools,xcolor}
\usepackage{mathtools}
\usepackage[mathscr]{euscript}
\let\euscr\mathscr 
\usepackage{graphicx}
\usepackage{caption}
\usepackage{bm}
\usepackage{enumerate}
\usepackage[colorlinks=true, allcolors=blue]{hyperref}
\usepackage{mathrsfs}
\usepackage{algorithm}
\usepackage{algpseudocode}
\usepackage{dsfont}
\usepackage{authblk}
\usepackage{abstract}

\theoremstyle{plain}
\newtheorem{theorem}{Theorem}[section]
\newtheorem{lemma}[theorem]{Lemma}
\newtheorem{corollary}[theorem]{Corollary}
\newtheorem{proposition}[theorem]{Proposition}
\theoremstyle{definition}
\newtheorem{definition}[theorem]{Definition}
\newtheorem{example}[theorem]{Example}
\theoremstyle{remark}
\newtheorem{remark}{Remark}

\declaretheoremstyle[
  headfont=\color{red}\normalfont\bfseries,
  ]{coloured}

\newcommand{\Rd}{\mathbb{R}^d}
\newcommand{\Sd}{\mathbb{S}^{d-1}}
\newcommand{\R}{\mathbb{R}}

\newcommand{\Pset}{\euscr{P}}
\newcommand{\Shw}{\mathcal{S}}
\newcommand{\mb}{\mathbf}

\newcommand{\bom}{\bm{\omega}}

\def\mb#1{\mathbf{#1}}
\def\mc#1{\mathscr{#1}}
\def\mbb#1{\mathbb{#1}}

\title{Multidimensional Dickman distribution and operator selfdecomposability}

\author[1]{Anastasiia S. Kovtun}
\author[1]{Nikolai N. Leonenko}
\author[1]{Andrey Pepelyshev}
\affil[1]{School of Mathematics, Cardiff University}

\begin{document}

\maketitle
\begin{abstract}

The one-dimensional Dickman distribution arises in various stochastic models across number theory, combinatorics, physics, and biology. Recently, a definition of the multidimensional Dickman distribution has appeared in the literature, together with its application to approximating the small jumps of multidimensional L\'evy processes. In this paper, we extend this definition to a class of vector-valued random elements, which we characterise as fixed points of a specific affine transformation involving a random matrix obtained from the matrix exponential of a uniformly distributed random variable.  We prove that these new distributions possess the key properties of infinite divisibility and operator selfdecomposability. Furthermore, we identify several cases where this new distribution arises as a limiting distribution.
\end{abstract}

\textbf{Keywords:} multivariate Dickman distribution, operator selfdecomposability, random affine transformation, random difference equation, random matrix

\section{Introduction}
The one-dimensional Dickman distribution, as defined in \cite{PW}, appears as a limiting law in several settings. For instance, it arises as the long-time limit of a shot-noise process with fixed signal amplitude and exponential propagation \cite{takacs1954secondary}, and in a model of the atomic cascade process \cite{ChamayouSchorr1975}. It can also be characterised as a fixed point of an affine transformation with random coefficients, or, equivalently, as the stationary solution of a random difference equation \cite{vervaat1979stochastic}.
Its relative, the max-Dickman distribution, initially arose in number theory \cite{Dickman30, deBruijn1951number}, and later appeared in a combinatorial setting \cite{Goncharov44}.   Recently, a number of new applications of the Dickman distribution have been discovered in random graph theory \cite{PW} as well as in various limiting schemes \cite{pinsky18,caravenna2019dickman,bhattacharjee2019dickman,bhattacharjee2020convergence,GMP}. The application of Dickman distribution in the theory of non-local operators was discussed in \cite{gupta2024generalized}. It was also shown to be useful in approximating the small jumps of positive L\'evy processes in cases when Brownian approximation fails \cite{covo2009approximations}. Some generalisations of the Dickman distribution have been considered in \cite{arratia2003logarithmic, pinsky18, bhattacharjee2019dickman, grahovac2025dickman}
 For simulation from the Dickman distribution, we refer to \cite{devroye2010simulating,fill2010perfect,dassios2019exact}.
For the historical review on the Dickman distribution, see \cite{PW, MP}. 

Recently, the generalisation of the one-dimensional Dickman distribution to the multidimensional case has appeared in \cite{bhattacharjee2020convergence} together with its application towards approximating small jumps of multidimensional L\'evy processes \cite{grabchak2024smalljumpslevyprocesses}. In this paper, we propose to extend the definition given in \cite{bhattacharjee2020convergence} to a class of vector-valued random elements, which we define, in analogy to the one-dimensional version, as fixed points of a certain affine transformation with random coefficients. In particular, the affine transformation we consider is induced by translating a given point by a random vector and further multiplying it by a factor defined as the operator (matrix) exponential of a uniformly distributed random variable. We refer to the elements of this class as the operator Dickman distributions. The motivation for this extension initially comes from the property of operator selfdecomposability, which we shall demonstrate holds for members of our class. It is also supported by some applications analogous to the classical applications of the one-dimensional Dickman distribution. 

The present paper is structured as follows. We recall the definition and key properties of the one-dimensional Dickman distribution in Section \ref{sec:Univ_D}. In Section \ref{sec:Multiv_D}, we first consider the multidimensional Dickman distribution as defined in \cite{bhattacharjee2020convergence}, and then introduce a broader class of operator Dickman distributions, highlighting some of their key properties. Some applications of the operator Dickman distributions are considered in Section \ref{sec:appl}. The simulations from the operator Dickman distribution are demonstrated in Section \ref{sec: simul}.

\section{One-dimensional Dickman distribution}\label{sec:Univ_D}

The one-dimensional Dickman distribution with parameter $\theta>0$ can be defined as the distribution of a random variable $X_D$ satisfying the equation 
\begin{equation}\label{eq: GD_def}
X_D \overset{d}{=} U^{1/\theta} ( 1 +X'_D),
\end{equation}
where the symbol ``$\overset{d}{=}$'' denotes the equality in distribution, $X'_D \overset{d}{=} X_D$, $U$ has the uniform distribution on $[0,1]$ and is independent of $X'_D$, see e.g. \cite{PW}. 
Equivalently, the random variable $X_D$ is the fixed point (in a distributional sense) of the random affine transformation $\cdot \mapsto U^{1/\theta}(1+\cdot)$, where $U$ is as above. 
We will further denote $GD_\theta$, the Dickman distribution with parameter $\theta>0$.

The distribution $GD_\theta$ is absolutely continuous for all $\theta>0$, and its density can be represented as 
\begin{equation*}
    f_{\theta}(x)=\frac {e^{-\gamma\theta}} {\Gamma(\theta)}\rho_{\theta}(x), ~x\in\mathbb{R},
\end{equation*}
where $\Gamma(\cdot)$ is the gamma function, $\gamma=-\Gamma'(1)\approx0.5772 $ is Euler's constant, and the function $\rho_{\theta}(x)$ satisfies the difference-differential equation
\begin{equation}\label{eq: Dickman_ODE}
\begin{aligned}
    x \rho'_{\theta}(x) +(1-\theta)\rho_{\theta}(x)+\theta\rho_{\theta}(x-1)=0, \quad  x>1,
\end{aligned}
\end{equation}
with initial conditions 
\begin{equation*}
\begin{aligned}
    &\rho_{\theta}(x) =0, \quad  x\leq 0,\\
    &\rho_{\theta}(x) =x^{\theta-1}, \quad 0<x\leq 1.
\end{aligned}
\end{equation*}

In the case of $\theta=1$, the function $\rho_1(\cdot)$ is known as the Dickman function. It occurred in the work of Karl Dickman \cite{Dickman30} in the number-theoretical context from which it gained its name. However, it can be found in even earlier Ramanujan's unpublished notes \cite[Ch. 8]{andrews_r_15}. 
The equation \eqref{eq: Dickman_ODE} and its generalisations have been studied in \cite{wheeler1990two}.

The density $f_{\theta}(x)$ of the $GD_\theta$ distribution has been an object of independent studies, and we recall here a few results related to it. 
The asymptotic behaviour of the density $f_{\theta}(x)$ for $\theta=1$ has been established in \cite{deBruijn1951asymptotic} and for general $\theta>0$ in \cite[Lem. 4.7.9]{vervaat1972success}. It was also proven in \cite[Thm. 4.7.7]{vervaat1972success} that the density $f_\theta(x)$ can be expressed explicitly as follows
\begin{equation*}
\begin{aligned}
    f_{\theta}(x)=\frac {e^{-\gamma\theta}} {\Gamma(\theta)}\left( x^{\theta-1}+\sum_{k=1}^{[x]-1}\frac {(-\theta)^k}  {k!} \idotsint\limits_{\substack{t_1,\ldots, t_k> 1 \\  t_1+\cdots+t_k\leq x}}(x-t_1-\cdots - t_k)^{\theta-1}\frac {dt_1\ldots dt_k} {t_1 \cdot\ldots\cdot t_k}\right),
\end{aligned}
\end{equation*}
where $x>0$, $[\cdot]$ is the integer part function;
see \cite[Thm. 1]{griffiths1988distribution} and \cite[Prop. 1]{gupta2024generalized} for similar representations.
Moreover, it was shown in \cite{caravenna2019dickman} that the following recurrent relation holds
\begin{equation*}
    f_{\theta}(x)=\begin{cases}
        \frac {e^{-\gamma\theta}} {\Gamma(\theta)} x^{\theta-1},& 0<x\leq 1,\\
        \frac {e^{-\gamma\theta}} {\Gamma(\theta)} x^{\theta-1}- \theta x^{\theta-1}\int_0^{x-1}\frac {f_{\theta}(z)} {(1+z)^{\theta}}dz,& x>1.
    \end{cases}
\end{equation*}

Note also that it follows from \eqref{eq: GD_def} that $X_D$ can be represented as 
\begin{equation}\label{eq: perp}
    X_D \overset{d}{=} U_1^{1/\theta}+(U_1U_2)^{1/\theta}+(U_1U_2U_3)^{1/\theta}+(U_1U_2U_3U_4)^{1/\theta}+\cdots,
\end{equation}
where $\{U_k\}_{k=1}^{\infty}$ is the sequence of mutually independent identically distributed random variables with uniform distribution on $[0,1]$, see, e.g., \cite{vervaat1979stochastic}. The series of the form \eqref{eq: perp} is commonly referred to as a perpetuity and is usually related to a random difference equation, see the equation \eqref{eq:randomAR} below. 
The random difference equations, which allow modelling discrete processes with a random discounting coefficient and random input at each time step, have been studied in a series of works, see, for example, \cite{kesten1973random,vervaat1979stochastic,embrechts1994perpetuities}.

The characteristic function of $X_{D}$ distributed according to $GD(\theta)$ is given by
\begin{equation}\label{e:CF_D}
\psi_D(z) = \mathbb{E} e^{izX_{D}} = \exp \left\{  \theta \int\limits_0^1 (e^{izu}-1) \frac{du}{u}  \right\},~ z\in \mathbb{R},
\end{equation}
and it follows from \eqref{e:CF_D} that the distribution $GD_\theta$ is infinitely divisible with the L\'evy measure 
\begin{equation*}
    \upsilon(du)=\frac {\theta} u \mathds{1}_{(0,1)}(u)du.
\end{equation*}
The $GD_\theta$ distribution is also self-decomposable with the canonical function $k(x)$ given by $k(x)=\theta\mathds{1}_{(0,1)}(x)$ \cite[Cor. 15.11]{sato1999levy}.

\section{Dickman distributions over \texorpdfstring{$\mathbb{R}^d$}{R\textasciicircum d}}\label{sec:Multiv_D}

The multidimensional Dickman distribution (as in Definition \ref{def:MD} below) was first introduced in \cite{bhattacharjee2020convergence} and further studied in \cite{grabchak2024representation}, see also \cite{grabchak2024smalljumpslevyprocesses, grabchak2026simulation}. In this section, we generalise the definition given there to a new class of distributions, which we will refer to as the operator Dickman distributions. We further study its properties. In particular, we derive the expression for the characteristic function of the operator Dickman distributions and consequently show that they constitute a subset of infinitely divisible distributions over $\Rd$. We also demonstrate that the class of operator Dickman distributions forms a subset of the set of operator selfdecomposable distributions.  Some specific examples of the distributions arising from this class are considered at the end of the section. 

 First, we introduce some notation which will be used throughout the paper.
 For elements $\mb x$ and $\mb y$ of the Euclidean space $\R^d$, $d\geq 1$, we denote their inner product by $\mb x\cdot\mb y$ and the Euclidean norm of $\mb x \in \Rd$ as $|\mb x|$. The set $\mbb S^{d-1}=\{\mb x\in \R^d: |\mb x|=1\}$ is the unit sphere in $\R^d$. For any topological space $S$, let $\mc B(S)$ denote the Borel sigma-algebra over $S$. Let also $\R_+$ denote the set of positive real numbers, and $\mathbb{C}$ denote the set of complex numbers.
 We will also use $\delta$ and $\mathds{1}$ as standard notation for the Dirac delta function and the indicator function, respectively; $a\wedge b$ denotes $\min\{a,b\}$ for $a,b\in \R$.
 
 The linear transformations of $\R^d$ to $\R^d$ are usually identified with their matrix representations, and the set of all $d\times d$ matrices is denoted by $\mc M$. Additionally,  $\mc M_+\subset\mc M$ ($\mc M_-\subset\mc M$) denotes the set of all elements of $\mc M$ whose eigenvalues have positive (negative) real parts. Let $I\in\mc{M}$ and $O\in \mc M$ denote, respectively, the identity and zero transformations, let also $\mathbb{I}=\{cI, c>0\}$ be a set of positive multiples of the identity transformation.  For any transformation $Q\in\mc M$, let $Q^*$ denote its adjoint. For any $Q\in \mc M$, $t\in \R$ and $u\in \R_+$, define the matrix exponential as follows
 \begin{equation*}
     e^{tQ}\coloneq \sum_{k=0}^\infty \frac {t^k} {k!}Q^k \quad \text{and}\quad u^{Q}\coloneq e^{Q\log u}.
 \end{equation*} Let $\|Q\| \coloneq\sup_{|\mb x|\leq1}|Q\mb x|$ denote the operator norm of $Q\in\mc M$.
 For any measure $\vartheta$ on $\Rd$ and any measurable map $T: \Rd\mapsto \Rd$, denote $T\vartheta$ the measure given by $T\vartheta(A)\coloneq \vartheta (T^{-1}(A))$, $A\in \mc B(\Rd)$.
 
 Let $\euscr{P}(\R^d)$ denote the set of probability measures over $\R^d$ and for any $\pi\in \Pset(\R^d)$ let $\hat{\pi}$ denote its characteristic function, i.e. $\hat{\pi}(\mb z)=\int\nolimits_{\R^d}e^{i\mb z\cdot\mb x}\pi(d\mb x)$. Let $H\subset\euscr{P}(\R^d)$ be the set of all probability measures $\nu$ such that 
 \begin{equation*}
     \int\nolimits_{|\mb x|>1}\log |\mb x|\nu(d\mb x)<\infty,
 \end{equation*} and let $\mc S\subset \euscr{P}(\R^d)$ be the set of all probability distributions concentrated on $d$-dimensional sphere, i.e. $\sigma\in \mc S$ whenever $\sigma(\mathbb{S}^{d-1})=1$. Note that $\mc S\subset H$.
 For a random vector $\mb X\in \R^d$, let $\mc L(\mb X)\in \Pset(\R^d)$ denote its probability distribution.
 
 We also denote $ID(\R^d)\subset \Pset(\R^d)$, the set of all infinitely divisible distributions. Recall that for any $\varrho\in  ID(\R^d)$, the characteristic function of $\varrho$ is given by the L\'evy-Khintchine formula 
 \begin{equation*}
     \hat{\varrho}(\mb z)=\exp\left[ i \mb a\cdot \mb z -\frac 1 2 \mb z \cdot A\mb z+\int\limits_{\R^d}(e^{i\mb z \cdot \mb x}-1-i\mb z \cdot \mb x\mathds{1}_{|\mb x|\leq 1})\ell(d\mb x)\right],\quad\mb z\in \Rd,
 \end{equation*}
 where $\mb a \in \R^d$, $A\in \mc M$ is a symmetric non-negative definite matrix and $\ell$ is the L\'evy measure on $\R^d$ satisfying $\ell(\{\mb 0\})=0$ and $\int_{\Rd}(1\wedge |\mb x|^2)\ell(d\mb x)<\infty$; the triplet $(\mb a, A, \ell)$ is the characteristic triplet of the distribution $\varrho$, see \cite{sato1999levy} for more details.

Let us recall the definition of the multidimensional Dickman distribution as introduced in \cite{bhattacharjee2020convergence}.
\begin{definition}[Multidimensional Dickman distribution]\label{def:MD}
    We call the distribution $\mbb D_{\theta,\sigma}=\mc L(\mb D)$ of the random vector $\mb D=\mb D_\theta^{(\sigma)}$ a multidimensional Dickman distribution if $\mb D$ satisfies the distributional equation
    \begin{equation}\label{eq: MD}
        \mb D\overset{d}{=}U^{1/\theta}(\mb D'+\mathbf{V}),
    \end{equation}
where $\theta>0$, $\mb D \overset{d}{=}\mb D'$, $U$ has the uniform distribution on $[0,1]$,  $\mc L(\mathbf{V})=\sigma \in \mc S$, and the random elements $U, \mb V, \mb D'$ are mutually independent.
\end{definition}

Next, we generalise this definition to a wider class of distributions by substituting the scalar function $u^{1/\theta}, \theta>0$, with a matrix exponential $u^Q, Q\in \mc M_+$, and a probability measure $\sigma\in \mc S$ with a probability measure $\nu\in H$.

\begin{definition}\label{def: opMD}
For any $Q\in  \mc M_+$ and $\nu \in  H$, the 
probability distribution $\mc D(Q,\nu )\in \euscr{P}(\R^d)$ is the distribution of a vector $\mb X$, which satisfies
\begin{equation}\label{eq: opMD}
    \mb X\overset{d}{=}U^{Q}(\mb X'+\mb W),
\end{equation}
where $\mb X\overset{d}{=}\mb X'$, $U$ is uniformly distributed over $[0,1]$, $\mc L (\mb W)=\nu$,  and the random elements $U,\mb X'$ and $\mb W$ are mutually independent.

Moreover, for any $\mc Q\subseteq\mc M_+$ and $\mc H\subseteq H$, let 
$\mc D(\mc Q,\mc H)$ be a set of distributions defined as 
\begin{equation*}
    \mc D(\mc Q,\mc H)=\{\mc D(Q, \nu), Q\in \mc Q, \nu\in \mc H\},
\end{equation*}
and the set $\mc D(\mc M_+, H)$ will be referred to as the class of \textit{operator Dickman distributions}.
\end{definition}

\begin{remark}
 Note that the random operator $U^Q$ is well-defined since the mapping $(s,Q)\mapsto s^Q$ is well-defined and jointly continuous for $s>0$ and $Q\in \mc M$.  Further details and properties of the exponential operators can be found in \cite[Ch. 2]{meerschaert2001limit}.  
\end{remark}

Notice that for any $Q \in \mc M_+$ the map $\gamma(a, \mb b): \cdot \mapsto a^Q(\cdot + \mb b)$ with $a\in \R_+$ and $\mb b\in\Rd $ defines an affine transformation on $\Rd$. Moreover the family $\gamma=\{\gamma(a, \mb b), a>0, \mb b\in\Rd\}$ forms a non-commutative group under the composition, which is isomorphic to the group $\R_+\times \Rd$ with group operation $*$ defined as $(a_1, \mb b_1)*(a_2, \mb b_2):= (a_1a_2, (a_1a_2)^Q\mb b_1+a_2^Q\mb b_2)$. We can now equip the group $\gamma$ with a probability measure defined as the product measure $Leb_1\times \nu$, where $Leb_1$ is the Lebesgue measure on $[0,1]$ and $\nu\in H$ is some probability measure. Therefore, the vector $\mb X$, defined in \eqref{eq: opMD}, can be viewed as the distributional fixed point of the random affine transformation described above. For $d=1$, this problem was studied in multiple works, see \cite{vervaat1979stochastic} for a short overview and references, and \cite{bhattacharjee2019dickman} for a generalisation to non-affine transformations.

As another point of view on Definition \ref{def: opMD}, we can consider the difference equation 
\begin{equation}\label{eq:randomAR}
    \mb Y_n=M_n \mb Y_{n-1}+\bm{\xi}_n,\quad n\geq 1,
\end{equation}
with some initial condition $\mb Y_0$ and random coefficients $M_n\in \mc M$ and $\bm \xi_n\in \Rd$, $ n\geq 1$. 
Indeed, by iteration, it is easy to see that the solution of \eqref{eq: opMD} can be rewritten as 
\begin{equation}\label{eq: opMDas_inf_sum}
    \mb X\overset{d}{=}\sum_{k=1}^\infty (U_1\cdot \ldots\cdot U_k)^Q\mb W_k,
\end{equation}
where $U_1, U_2, \ldots$ and $\mb W_1, \mb W_2, \dots$ are independent sequences of mutually independent random elements, sampled from the uniform distribution on $[0,1]$ and the measure $\nu$, respectively. Thus, the solution $\mb X$ of \eqref{eq: opMD} is a weak limit as $n\to \infty$ of $\mb Y_n$ given by \eqref{eq:randomAR} in the case of $M_k= U_k^Q$ and $\bm \xi_k=M_{k}\mb W_k$. 

The existence of a weak limit of $\mb Y_n$ as $n\to \infty$, independent of the initial condition $\mb Y_0$, under some mild conditions on $M_n$ and $\bm \xi_n$ was established in the celebrated work of Kesten \cite{kesten1973random}.
In particular, Kesten \cite[Thm. 6]{kesten1973random} showed that the limit exists if $-\infty <\mathbb{E}\log\|M_1\|<0$ and  $0<\mathbb {E}|\bm \xi_1|^\kappa<\infty$ for some $\kappa>0$. However, as shown in \cite{vervaat1979stochastic} for $d=1$, the last condition can be weakened to $\mathbb {E}\log^+ |\xi_1|<\infty$ under some assumptions on $M_1$, and this proof translates easily for $d>1$ in our case, using \eqref{eq: norm_exp_Q} below. Hereafter, we define \begin{equation*}
    \log^+x=\begin{cases}
        0, &0<x\leq 1\\
        \log x, &x>1.
    \end{cases}
\end{equation*}

The model \eqref{eq:randomAR} is also commonly referred to as the random-coefficient $AR(1)$ process and was proposed as a valid model in physical, biological and financial applications, see \cite{vervaat1979stochastic} and references therein. 
The one-dimensional version of this problem has been studied in a series of works, see e.g. \cite{ChamayouSchorr1975,grincevivcius1975limit,embrechts1994perpetuities}.

Another interpretation of the representation \eqref{eq: opMDas_inf_sum} can be obtained if we rewrite first 
\begin{equation}\label{eq: shot-noise}
    \mb X\overset{d}{=}\sum_{k=1}^{\infty} f(T_k)\mb W_k,
\end{equation}
where $T_k$ are the arrival times of the Poisson process with intensity $1$ and $f: \R_+\mapsto \mc M$ is the measurable function defined by
$f(t)=e^{-tQ}$. The sum in \eqref{eq: shot-noise} represents the weak limit of the sum
\begin{equation}\label{eq: shot-noise_finite_time}
    \sum_{k=1}^{\infty} f(t-T_k')\mathds{1}\{T_k'<t\}\mb W_k'
\end{equation}
as $t\to \infty$. Here, the sequences $T_k',\mb W_k'$, $k\geq 1$ have the same joint distribution as $T_k,\mb W_k$, $k\geq 1$.
The equation \eqref{eq: shot-noise_finite_time} can be interpreted as the accumulation, up to time $t$, of a multidimensional signal.
This signal appears at random time points according to the Poisson process with intensity $1$; the amplitudes are independent of each other and of the arrival times, and identically distributed according to the law $\nu$; the signal propagates in time according to the law $f(\cdot)$. For more details on such models, we refer to \cite{takacs1954secondary, takacs1955stochastic, westcott1976existence}.

\begin{remark}\label{rem:diff_Dickman}
    It is evident that the set of multidimensional Dickman distributions $\{\mbb D_{\theta, \sigma}, \theta>0, \sigma\in \mc S \}$ coincides with a subset $\mc D(\mathbb{I} , \mc S)$ of $\mc D(\mc M_+, H)$.
    Moreover, for $d=1$ the sphere $\mathbb S^0$ consists of two points $\mbb S ^0=\{-1,1\}$ and by choosing in this case $Q=\theta>0$ and $\sigma(\{1\})=1-\sigma(\{-1\})=1$ we can recover the Dickman distribution $GD_\theta$. 
    
    The one-dimensional distribution $\mc D(1/\theta,\nu)$ with a more general type of probability measure $\nu\in H$ appears naturally in the contexts of shot-noise processes and random difference equations as well as in the limit schemes, see, e.g, \cite{ChamayouSchorr1975, vervaat1979stochastic, pinsky18}. Notably, in the particular case $\nu(dx)=be^{-bx}dx$,$x>0$, with some $b>0$, the distribution $\mc D(1/\theta,\nu)$ coincides with the well-known Gamma distribution having characteristic function of the form
    \begin{equation*}
      \exp \left\{  \theta \int\limits_0^1 (e^{izu}-1) e^{-bu}\frac{du}{u}  \right\},~ z\in \mathbb{R}.
    \end{equation*}
\end{remark}

\begin{remark}
    Note that in the case of $\nu=\delta_{\mb 0}$ it holds that $\mb X=\mb 0$ almost surely, and this situation is trivial. We shall exclude it from consideration and, furthermore, assume that $\nu(\{\mb 0\})=0$.
\end{remark}

Note also that throughout the work, we will make use of the following estimate.
Let $Q\in \mc M_+$ be fixed, then there exist positive constants $c_1,c_2$ and $K_1,K_2$ such that for all $0\leq s\leq 1$ it holds that
\begin{equation}\label{eq: norm_exp_Q}
    c_1s^{K_1}|\mb x|\leq \|s^Q \mb x\|\leq c_2s^{K_2}|\mb x|,
\end{equation}
see the proof of \cite[Lem. 6.1]{urbanik1972levy}.

To establish properties of the operator Dickman distributions, we will need the following lemma.

\begin{lemma}\label{lem1}
    Let $Q\in\mc M_+$ be fixed.
    \begin{enumerate}[(i)]
        \item  \label{lem1:i} For all $\mb x\in \Rd$ the integral
    \begin{equation*}
        P_1(\mb x)=\int\limits_0^1( |s^Q\mb x|\wedge 1) \frac {ds} s 
    \end{equation*}
    is finite, and  $P_1(\mb x)\leq B_1\log(1+|\mb x|)$ for some $B_1>0$.
    \item  \label{lem1:ii} For all $\mb x\in \Rd$ the integral
    \begin{equation*}
        P_2(\mb x)=\int\limits_0^1( |s^Q\mb x|^2\wedge 1) \frac {ds} s 
    \end{equation*}
    is finite, and  $P_2(\mb x)\leq B_2\log(1+|\mb x|^2)$ for some $B_2>0$.
    \end{enumerate}
\end{lemma}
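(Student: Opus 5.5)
The plan is to reduce both integrals to scalar integrals using the two-sided estimate \eqref{eq: norm_exp_Q}, and then compute the scalar integrals explicitly. For $\mb x=\mb 0$ both integrands vanish, so assume $|\mb x|>0$. The upper bound in \eqref{eq: norm_exp_Q} gives $|s^Q\mb x|\leq c_2 s^{K_2}|\mb x|$ for $0<s\le 1$. Hence
\begin{equation*}
P_1(\mb x)\leq \int_0^1 \bigl(c_2 s^{K_2}|\mb x|\wedge 1\bigr)\frac{ds}{s},
\end{equation*}
and similarly for $P_2$ with $c_2^2 s^{2K_2}|\mb x|^2$. The lower bound in \eqref{eq: norm_exp_Q} is not needed here. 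Finiteness already follows, since near $s=0$ the integrand is at most $c_2 |\mb x| s^{K_2-1}$, which is integrable because $K_2>0$.

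Next I substitute $t=s^{K_2}$, so that $ds/s = dt/(K_2 t)$. With $a=c_2|\mb x|$ this gives
\begin{equation*}
P_1(\mb x)\le \frac1{K_2}J(a), \qquad J(a)=\int_0^1 (at\wedge1)\frac{dt}{t}.
\end{equation*}
If $a\le1$, then $J(a)=a$. If $a>1$, I split at $t=1/a$ and get $J(a)=1+\log a$. So $J(a)\le a$ for $a\le 1$ and $J(a)\le 1+\log a$ for $a>1$.

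It remains to show $J(c_2 r)\le C\log(1+r)$ for all $r>0$.
\begin{enumerate}[(a)]
\item For small $r$ (say $c_2 r\le 1$ and $r\le1$), use $\log(1+r)\ge r\log 2$. Then $c_2 r\le (c_2/\log 2)\log(1+r)$.
\item For large $r$, $1+\log(c_2 r)\le C\log(1+r)$ holds once $r$ is bounded away from $0$.
\item On the intermediate compact range of $r$ (bounded away from $0$ and $\infty$), both sides are continuous and $\log(1+r)>0$, so the ratio is bounded.
\end{enumerate}
This gives $B_1$. The only point needing care is the behaviour near $r=0$, where the right-hand side vanishes. It is handled because $J(a)$ is linear in $a$ for $a\le1$.

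For (ii) the argument is identical with $t=s^{2K_2}$ and $a=c_2^2|\mb x|^2$. This gives $P_2(\mb x)\le (2K_2)^{-1}J(c_2^2|\mb x|^2)$, and the same three-range comparison, now with $r=|\mb x|^2$, yields $P_2(\mb x)\le B_2\log(1+|\mb x|^2)$. I expect no real obstacle. The main care is the uniform constant across the regimes of $|\mb x|$, in particular matching the linear behaviour at the origin with $\log(1+r)\sim r$.
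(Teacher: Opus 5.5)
Your proposal is correct and follows essentially the same route as the paper for (i): you bound via the upper estimate in \eqref{eq: norm_exp_Q}, substitute $t=s^{K_2}$, and evaluate the scalar integral explicitly as $a$ or $1+\log a$. The paper instead pulls $c_2$ out of the minimum after assuming $c_2>1$, and leaves as ``the result follows'' the final comparison with $\log(1+|\mb x|)$ that you carry out explicitly; for (ii) it only cites Urbanik's Lemma 6.1, whereas your repetition of the computation with $t=s^{2K_2}$ and $a=c_2^2|\mb x|^2$ gives a valid self-contained proof.
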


\begin{proof}
\eqref{lem1:i} First note that the function $f(t)=t \wedge 1$ is non-decreasing. We can assume, without loss of generality, that $c_2>1$ in \eqref{eq: norm_exp_Q}. Thus, we obtain
\begin{align*}
    \int\limits_0^1( |s^Q\mb x|\wedge 1) \frac {ds} s &\leq c_2
    \int\limits_0^1( s^{K_2}|\mb x|\wedge 1) \frac {ds} s= \frac {c_2} {K_2} \int\limits_0^1( s|\mb x|\wedge 1) \frac {ds} s\\
    &= \frac 
    {c_2} {K_2}  \int\limits_0^{|\mb x|} ( s\wedge 1) \frac {ds} s=
    \frac {c_2} {K_2}\left(1\wedge |\mb x|-\log\left (1\wedge |\mb x|^{-1}\right)\right),
\end{align*}
and the result follows.

\eqref{lem1:ii}  This is a direct consequence of \cite[Lem. 6.1]{urbanik1972levy}
\end{proof}
\begin{proposition}\label{prop: MD_class_cf}
    For any $Q\in \mc M_+$ and $\nu\in H$, the distribution  $\mc D =\mc D (Q,\nu)$ 
    has the characteristic function of the form
    \begin{equation}\label{eq:opMD_CF}
        \psi(\mb z)=\hat{\mc D}(\mb z)=\exp \left[\int\limits_{\R^d}\int\limits_0^1(e^{i\mb z\cdot s^Q\mb x}-1)\nu(d\mb x) \frac {ds} s  \right],\quad \mb z \in \R^d.
    \end{equation}

\end{proposition}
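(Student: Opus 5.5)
We plan to show two things: the exponent in \eqref{eq:opMD_CF} is well defined, and the resulting function $\psi$ solves the characteristic-function version of \eqref{eq: opMD}. A separate step will identify $\psi$ with $\hat{\mc D}$, i.e.\ establish uniqueness of the fixed point.

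\textbf{Step 1 (convergence).} Since $|e^{iy}-1|\le 2(|y|\wedge 1)$, we have
\[
|e^{i\mb z\cdot s^Q\mb x}-1|\le 2\,(|\mb z|\,|s^Q\mb x|\wedge 1)\le 2(|\mb z|\vee 1)\,(|s^Q\mb x|\wedge 1).
\]
Lemma \ref{lem1}\eqref{lem1:i} then bounds the inner $ds/s$ integral by $2(|\mb z|\vee1)B_1\log(1+|\mb x|)$. This is $\nu$-integrable: on $|\mb x|\le1$ it is at most $\log 2$, and for $|\mb x|>1$ we have $\log(1+|\mb x|)\le \log 2+\log|\mb x|$, which is integrable because $\nu\in H$. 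Hence the double integral converges absolutely, Fubini applies, and the exponent $\Phi(\mb z)$ is finite and continuous in $\mb z$ by dominated convergence.

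\textbf{Step 2 (fixed point identity).} Taking characteristic functions in \eqref{eq: opMD} and using independence together with $\mb z\cdot s^Q\mb y=(s^Q)^*\mb z\cdot\mb y$, a candidate $\psi$ must satisfy
\[
\psi(\mb z)=\int_0^1 \psi\big((s^Q)^*\mb z\big)\,\hat\nu\big((s^Q)^*\mb z\big)\,ds .
\]
We verify this for $\psi=e^{\Phi}$ through a differential identity along the orbit $r\mapsto (r^Q)^*\mb z$. Define $g(r)=\Phi((r^Q)^*\mb z)$ for $r\in(0,1]$. Using $s^Q r^Q=(sr)^Q$ and substituting $t=sr$ gives
\[
g(r)=\int\!\int_0^r (e^{i\mb z\cdot t^Q\mb x}-1)\,\frac{dt}{t}\,\nu(d\mb x),
\]
so $r g'(r)=\hat\nu((r^Q)^*\mb z)-1$ for a.e.\ $r$, with $g(r)\to0$ as $r\to0$. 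Set $h(r)=r\,e^{g(r)}$. Then $h'(r)=e^{g(r)}(1+rg'(r))=e^{g(r)}\hat\nu((r^Q)^*\mb z)$. Integrating from $0$ to $1$ and using $h(0)=0$ yields $e^{\Phi(\mb z)}=\int_0^1 e^{g(r)}\hat\nu((r^Q)^*\mb z)\,dr$, which is exactly the identity above.

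\textbf{Step 3 (uniqueness and identification).} This is where the real work lies: $\mc D(Q,\nu)$ must be the unique solution, so that the verified $\psi$ is its characteristic function. We take $\mc D(Q,\nu)$ to be the law of the series \eqref{eq: opMDas_inf_sum}. Under $\nu\in H$ this series converges almost surely: by \eqref{eq: norm_exp_Q}, $\|(U_1\cdots U_k)^Q\|\le c_2 e^{-K_2 S_k}$ with $S_k$ a sum of i.i.d.\ Exp(1) variables, so $S_k\sim k$ almost surely, and $\log^+|\mb W_k|=o(k)$ almost surely by Borel--Cantelli since $\mathbb E\log^+|\mb W|<\infty$.

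Uniqueness follows by a contraction argument. Suppose $\mb X$ and $\mb X''$ are two solutions. Iterating \eqref{eq: opMD} $n$ times shows that each equals in law $\mb Y_n+(U_1\cdots U_n)^Q\mb X'$, where $\mb Y_n$ is the $n$-th partial sum and $(U_1\cdots U_n)^Q\to O$ almost surely. Therefore both have the law of the series, and the solution is unique.

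Finally, there is a cleaner alternative that avoids the ODE step. Use the shot-noise form \eqref{eq: shot-noise}: the sum is a Poisson functional, and Campbell's formula for the marked Poisson process $\{(T_k,\mb W_k)\}$ gives
\[
\mathbb E e^{i\mb z\cdot\mb X}=\exp\int_0^\infty\!\!\int(e^{i\mb z\cdot e^{-tQ}\mb x}-1)\,\nu(d\mb x)\,dt .
\]
The substitution $s=e^{-t}$ turns this directly into \eqref{eq:opMD_CF}; Step 1 justifies applying Campbell's formula. We would likely present this Campbell route as the main proof, with Steps 1 and 3 supplying the convergence and identification.
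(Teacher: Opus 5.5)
Your proof is correct, but it takes a different route from the paper.

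\textbf{The paper's route.} The paper works directly from the fixed-point equation. It substitutes $\mb z=r^{Q^*}\bom$ into $\psi(\mb z)=\int_0^1\psi(u^{Q^*}\mb z)\hat\nu(u^{Q^*}\mb z)\,du$ and rewrites the right side as $\frac1r\int_0^r(\cdots)\,du$. Differentiating in $r$ gives $\partial_r\log\psi(r^{Q^*}\bom)=\frac1r\bigl(\hat\nu(r^{Q^*}\bom)-1\bigr)$, which it integrates using $\psi(\mb 0)=1$. This is a necessity argument: every solution must have the stated characteristic function. Uniqueness therefore comes for free, while existence is borrowed from the Kesten--Vervaat discussion. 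The price is that differentiating, taking logarithms along the orbit and fixing the constant at $r=0$ are done without comment. These steps are justified by the same integrability estimate as your Step 1.

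\textbf{Your Step 2.} This is the converse computation: you verify the candidate rather than derive it, which is why you then need Step 3. On its own it also needs $e^{\Phi}$ to be a characteristic function. That is true, since $\Phi$ is a L\'evy--Khintchine exponent with L\'evy measure $M$ satisfying $\int(1\wedge|\mb y|)M(d\mb y)<\infty$, but you should say so. Alternatively, run your contraction argument at the level of bounded continuous solutions of the functional equation.

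\textbf{Your Campbell route.} This is genuinely different. Via $T_k=-\log(U_1\cdots U_k)$, the series \eqref{eq: opMDas_inf_sum} is a Poisson functional. Step 1 is exactly the condition $\int(|f|\wedge 1)\,d\mu<\infty$ needed for the exponential formula with $f(t,\mb x)=e^{-tQ}\mb x$ and $\mu=dt\otimes\nu$. The formula then drops out with no differentiation. Existence, infinite divisibility and the L\'evy measure of Corollary~\ref{cor: ID_opMD} (the image of $dt\otimes\nu$ under $(t,\mb x)\mapsto e^{-tQ}\mb x$) all become visible at once. What it costs is the separate uniqueness argument of Step 3, which you handle correctly. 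One small precision there: the remainder $(U_1\cdots U_n)^Q\mb X^{(n)}$ tends to $\mb 0$ only in probability, because the independent copy $\mb X^{(n)}$ changes with $n$. That suffices by Slutsky's lemma.
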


\begin{proof}
Note first that the finiteness of the integral in \eqref{eq:opMD_CF} follows by Lemma~\ref{lem1}\ref{lem1:i} and the assumption $\nu\in H$.
Notice now that \eqref{eq: opMD} yields
\begin{equation*}
    \psi(\mb z)=\int\limits_{0}^1\psi(u^{Q^*}\mb z)\hat{\nu}(u^{Q^*}\mb z)du,\quad \mb z \in \R^d.
    \end{equation*}
Replacing  the variable $\mb z$ with $r^{Q^*}\bom$ for $r\geq 0$ and $\bom\in \Rd$, where by convention $0^{Q^{*}}\bom=\mb 0$ for all $\bom\in\Rd$, we get
\begin{align*}
    \psi(r^{Q^*} \bom)=&\int\limits_{0}^1\psi((ur)^{Q^*}\bom )\hat{\nu}((ur)^{Q^*}\bom)du\\
    =&\frac 1 r \int\limits_{0}^r\psi(u^{Q^*}\bom )\hat{\nu}(u^{Q^*}\bom)du.
\end{align*}
Since the latter expression is differentiable with respect to $r$, we can obtain
  \begin{align*}
    \frac \partial {\partial r}\psi(r^{Q^*} \bom)
    =&- \frac 1 {r^2} \int\limits_{0}^r\psi(u^{Q^*}\bom )\hat{\nu} (u^{Q^*}\bom)du+\frac 1 r \psi(r^{Q^*}\bom )\hat{\nu} (r^{Q^*}\bom)\\
    =&\frac 1 r \psi(r^{Q^*}\bom )(\hat{\nu} (r^{Q^*}\bom)-1),
\end{align*}  
or, alternatively,
\begin{align*}
    \frac \partial {\partial r}\log \psi(r^{Q^*}\bom)
    =\frac 1 r (\hat{\nu} (r^{Q^*}\bom)-1).
\end{align*} 
It follows from the above that there exists $g: \Rd\mapsto \mathbb{C}$ such that
\begin{align*}
    \log \psi(r^{Q^*}\bom)
    =\int\limits_0^r(\hat{\nu} (s^{Q^*}\bom)-1) \frac {ds} s +g(\bom)=
    \int\limits_0^1(\hat{\nu} ((sr)^{Q^*}\bom)-1) \frac {ds} s +g(\bom).
\end{align*}
Letting $r=0$, we can deduce from $\psi(\mb 0)=1$ that $g(\bom)=0$ for all $\bom\in \Rd$.

Returning to the variable $\mb z$, we finally get
\begin{align*}
    \log \psi(\mb z )=
    \int\limits_0^1(\hat{\nu} (s^{Q^*}\mb z)-1) \frac {ds} s=\int\limits_{\Rd}\int\limits_0^1(e^{is^{Q^*}\mb z\cdot \mb x }-1) \nu(d \mb x) \frac {ds} s ,
\end{align*}
and the result follows since $\left(s^{Q^*}\right)^*=s^Q$.
\end{proof}

\begin{corollary}\label{cor: CF_opMD}
     The characteristic function $\psi(\mb z)$ of $\mc D (Q,\nu)$, $Q\in\mc M_+$, $\nu\in H$, can be written as
        \begin{equation*}
        \psi(\mb z)=\exp \left[\int\limits_0^1(\hat{\nu}(s^{Q^*}\mb z)-1)\frac {ds} s  \right].
    \end{equation*}
\end{corollary}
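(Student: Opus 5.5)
This follows directly from Proposition~\ref{prop: MD_class_cf}: the task is to exchange the order of integration in \eqref{eq:opMD_CF} and identify the inner integral as $\hat\nu$. The plan has three steps.

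First, rewrite the integrand using the adjoint. For every $s\in(0,1]$, $\mb z\in\Rd$ and $\mb x\in\Rd$ we have
\begin{equation*}
\mb z\cdot s^Q\mb x=(s^Q)^*\mb z\cdot \mb x=s^{Q^*}\mb z\cdot\mb x .
\end{equation*}
This uses $(s^Q)^*=s^{Q^*}$, which holds termwise in the series defining $e^{Q\log s}$.

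Second, justify Fubini for the double integral of $(e^{i\mb z\cdot s^Q\mb x}-1)$ with respect to $\nu(d\mb x)\,ds/s$ on $\Rd\times(0,1]$. This is the only step needing care, since $ds/s$ is not finite near $0$. I would use the elementary bound
\begin{equation*}
|e^{i\mb z\cdot s^Q\mb x}-1|\le 2\left(|\mb z|\,|s^Q\mb x|\wedge 1\right)\le 2\max(|\mb z|,1)\left(|s^Q\mb x|\wedge 1\right).
\end{equation*}
Integrating in $s$ and applying Lemma~\ref{lem1}\eqref{lem1:i} bounds this by $2\max(|\mb z|,1)B_1\log(1+|\mb x|)$. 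This bound is $\nu$-integrable: it is bounded on $\{|\mb x|\le1\}$, and on $\{|\mb x|>1\}$ we have $\log(1+|\mb x|)\le \log 2+\log|\mb x|$, which is integrable because $\nu\in H$. Hence the integrand is absolutely integrable on the product space, and Fubini's theorem allows the order of integration to be swapped.

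Third, carry out the inner $\nu$-integration. For each fixed $s$,
\begin{equation*}
\int_{\Rd}\left(e^{i s^{Q^*}\mb z\cdot\mb x}-1\right)\nu(d\mb x)=\hat\nu(s^{Q^*}\mb z)-1,
\end{equation*}
using that $\nu$ is a probability measure. Substituting this into \eqref{eq:opMD_CF} gives the stated formula. In fact this is exactly the last displayed identity in the proof of Proposition~\ref{prop: MD_class_cf}, so the corollary could alternatively be read off from there directly.
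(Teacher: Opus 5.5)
Your proposal is correct and matches the paper. The paper gives no separate proof of this corollary: it is the first equality in the last display of the proof of Proposition~\ref{prop: MD_class_cf}, and the paper passes between the two forms via $(s^{Q^*})^*=s^Q$ exactly as you do. Your domination bound via Lemma~\ref{lem1}\eqref{lem1:i} and $\nu\in H$ just makes explicit the Fubini interchange that the paper performs without comment.
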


\begin{corollary}\label{cor: ID_opMD}
     The class $\mc D(\mc M_+, H)$ is a subset of $ID(\Rd)$ and for any~$Q\in \mc M_+$ and $\nu\in H$ the distribution $\mc D(Q,\nu)$ has the characteristic triplet $(\mb a, O, M)$, where 
     \begin{equation}\label{eq: opMD_LChar}
     \begin{gathered}
         \mb a=\int\limits_{\Rd}\int\limits_0^1s^Q\mb x \mathds{1}_{|s^Q\mb x|\leq 1}\nu(d\mb x) \frac {ds} s ,\\ M(B)=\int\limits_{\Rd}\int\limits_0^1 \mathds{1}_{B}(s^Q\mb x)\nu(d\mb x) \frac {ds} s ,\quad B\in \mc B(\R^d).
     \end{gathered}
     \end{equation}
\end{corollary}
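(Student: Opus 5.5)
We start from the characteristic function in Proposition~\ref{prop: MD_class_cf} and rewrite its exponent in Lévy–Khintchine form. Define $M$ as in \eqref{eq: opMD_LChar}. It is the image of the product measure $\nu(d\mb x)\,ds/s$ on $\Rd\times(0,1]$ under the map $(\mb x,s)\mapsto s^Q\mb x$. By the change-of-variables formula for image measures, for every measurable $h\ge 0$, or $h$ integrable with respect to $M$,
\begin{equation*}
\int\limits_{\Rd} h(\mb y)\,M(d\mb y)=\int\limits_{\Rd}\int\limits_0^1 h(s^Q\mb x)\,\nu(d\mb x)\frac{ds}{s}.
\end{equation*}

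\textbf{Step 1: $M$ is a Lévy measure.} Since $s^Q$ is invertible, $s^Q\mb x=\mb 0$ only if $\mb x=\mb 0$. The standing assumption $\nu(\{\mb 0\})=0$ therefore gives $M(\{\mb 0\})=0$. Taking $h(\mb y)=|\mb y|^2\wedge 1$ in the display and using Lemma~\ref{lem1}\eqref{lem1:ii}, we get
\[
\int (1\wedge|\mb y|^2)\,M(d\mb y)=\int P_2(\mb x)\,\nu(d\mb x)\le B_2\int\log(1+|\mb x|^2)\,\nu(d\mb x).
\]
The right-hand side is finite because $\log(1+|\mb x|^2)\le \log 2+2\log^+|\mb x|$ and $\nu\in H$.

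\textbf{Step 2: $\mb a$ is well defined.} Take $h(\mb y)=|\mb y|\mathds 1_{|\mb y|\le1}\le |\mb y|\wedge 1$. Lemma~\ref{lem1}\eqref{lem1:i} bounds $\int\!\int |s^Q\mb x|\mathds 1_{|s^Q\mb x|\le1}\,\nu(d\mb x)\,ds/s$ by $B_1\int\log(1+|\mb x|)\,\nu(d\mb x)<\infty$. So the vector integral defining $\mb a$ converges absolutely, and it equals $\int_{\Rd}\mb y\mathds 1_{|\mb y|\le 1}\,M(d\mb y)$.

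\textbf{Step 3: identify the exponent.} For fixed $\mb z$, split
\[
e^{i\mb z\cdot s^Q\mb x}-1=\bigl(e^{i\mb z\cdot s^Q\mb x}-1-i\mb z\cdot s^Q\mb x\mathds 1_{|s^Q\mb x|\le1}\bigr)+i\mb z\cdot s^Q\mb x\mathds 1_{|s^Q\mb x|\le1}.
\]
The second term is integrable by Step 2. The first is bounded by $C(1+|\mb z|^2)(|s^Q\mb x|^2\wedge1)$, hence integrable by Step 1. We may thus split the double integral in \eqref{eq:opMD_CF} into two pieces and apply the change-of-variables formula to each. This yields
\[
\log\psi(\mb z)=i\mb a\cdot\mb z+\int\limits_{\Rd}\bigl(e^{i\mb z\cdot\mb y}-1-i\mb z\cdot\mb y\mathds 1_{|\mb y|\le1}\bigr)M(d\mb y).
\]
This is the Lévy–Khintchine formula with Gaussian part $O$. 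Such a function is the characteristic function of an infinitely divisible law, and by uniqueness of the triplet (\cite{sato1999levy}) the triplet is $(\mb a,O,M)$. Hence $\mc D(\mc M_+,H)\subset ID(\Rd)$.

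The only delicate point is the justification in Step 3: one must verify the integrability bounds before splitting the integral, since $\int(e^{i\mb z\cdot s^Q\mb x}-1)$ alone converges only through the cancellation recorded in Lemma~\ref{lem1}. Once the Lemma~\ref{lem1} estimates are combined with $\nu\in H$, the rest is routine.
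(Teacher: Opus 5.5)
Your proposal is correct and follows the paper's proof. The paper likewise reads the triplet off \eqref{eq:opMD_CF} after checking $|\mathbf a|<\infty$ via Lemma~\ref{lem1}(i) and $\int(1\wedge|\mathbf y|^2)M(d\mathbf y)<\infty$ via Lemma~\ref{lem1}(ii); your Step~3 makes explicit the splitting of the integral that the paper leaves implicit. One small correction to your closing remark: the double integral of $e^{i\mathbf z\cdot s^Q\mathbf x}-1$ converges absolutely, since $|e^{it}-1|\le 2\wedge|t|$ and Lemma~\ref{lem1}(i) apply, so no cancellation is involved.
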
  

\begin{proof}
    The result follows directly from \eqref{eq:opMD_CF} after verification that $|\mb a|<\infty$ and $\int_{\Rd}(1\wedge |\mb y|^2)M(d\mb y)<\infty$. 
    Indeed, using Lemma~\ref{lem1}\ref{lem1:i} we obtain that
    \begin{equation*}
        |\mb a|\leq \int\limits_0^1\int\limits_{\R^d}(|s^Q\mb x|\wedge 1)\nu(d\mb x) \frac {ds} s \leq B_1\int\limits_{\R^d} \log(1+|\mb x|)\nu (d\mb x) < \infty.
    \end{equation*}
   Similarly, using Lemma~\ref{lem1}\ref{lem1:ii}, we get
     \begin{align*}
        \int\limits_{\R^d} (1\wedge |\mb y|^2)M(d\mb y)=
        & \int\limits_{\R^d} \int\limits_0^1 (1\wedge |s^Q\mb x|^2) \nu (d\mb x) \frac {ds} s\\
        \leq & B_2 \int\limits_{\R^d} \log(1+|\mb x|^2) \nu (d\mb x)<\infty.
    \end{align*}
    Note that we used the assumption $\nu\in H$ in both parts.
\end{proof}
\begin{remark}
    Remarkably, the distributions with L\'evy measure as in \eqref{eq: opMD_LChar} previously appeared in \cite{jurek1982structure}.
\end{remark}

\begin{proposition}\label{prop:mom_cov}
   \begin{enumerate}[(i)]
       \item \label{prop:mom_cov: i} Assume that $\nu\in H$ is such that $\int\limits_{\Rd}|\mb x|\nu(d\mb x) <\infty$. Then the mean vector $\mb m_{Q,\nu}$ of $\mc D(Q, \nu)$ exists for any $Q\in \mc M_+%
    $ and can be given by
    \begin{equation}\label{eq:mean_vec}
        \mb m_{Q,\nu}=\int\limits_{\R^d} \int\limits_0^1s^Q\mb x \nu(d \mb x)\frac {ds} s.
    \end{equation}
    \item  \label{prop:mom_cov: ii} Assume that $\nu\in H$ such that $\int\limits_{\Rd}|\mb x|^2\nu(d \mb x) <\infty$. Then the covariance matrix $\mathbb C_{Q,\nu}$ of $\mc D(Q, \nu)$ exists for any $Q\in \mc M_+%
    $  and can be given by
    \begin{equation}\label{eq:cov_matr}
        \mathbb C_{Q,\nu}=\int\limits_{\R^d} \int\limits_0^1 (s^Q\mb x) (s^Q\mb x)^T\nu(d \mb x) \frac {ds} s, 
    \end{equation}
   where $\mb x  \mb y^T=[x_iy_j]_{i,j=1}^d\in \mc M$ for any $\mb x,\mb y\in \Rd$.
   \end{enumerate}
\end{proposition}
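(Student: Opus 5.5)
Both parts can be read off from the infinitely divisible structure in Corollary~\ref{cor: ID_opMD}. I would use the standard facts for $\varrho\in ID(\Rd)$ with triplet $(\mb a,O,M)$, as in Sato \cite[Cor.~25.8 and Example~25.12]{sato1999levy}:
\begin{itemize}
\item $\varrho$ has a finite first (second) absolute moment if and only if $\int_{|\mb y|>1}|\mb y|\,M(d\mb y)<\infty$ (respectively $\int_{|\mb y|>1}|\mb y|^2 M(d\mb y)<\infty$);
\item in that case the mean is $\mb a+\int_{|\mb y|>1}\mb y\,M(d\mb y)$, and the covariance is $\int_{\Rd}\mb y\mb y^T M(d\mb y)$, since the Gaussian part vanishes.
\end{itemize}
By the definition of $M$ in \eqref{eq: opMD_LChar}, every integral against $M$ becomes $\int_{\Rd}\int_0^1 g(s^Q\mb x)\,\nu(d\mb x)\,\frac{ds}{s}$.

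\textbf{(i).} By \eqref{eq: norm_exp_Q},
\begin{equation*}
\int_0^1 |s^Q\mb x|\frac{ds}{s}\le c_2|\mb x|\int_0^1 s^{K_2-1}ds=\frac{c_2}{K_2}|\mb x|.
\end{equation*}
Under the first-moment hypothesis on $\nu$, this gives $\int_{\Rd}\int_0^1 |s^Q\mb x|\,\nu(d\mb x)\frac{ds}{s}<\infty$. In particular $\int_{|\mb y|>1}|\mb y|\,M(d\mb y)<\infty$, so the mean exists. Adding $\mb a$ to $\int_{\Rd}\int_0^1 s^Q\mb x\,\mathds 1_{|s^Q\mb x|>1}\,\nu(d\mb x)\frac{ds}{s}$ merges the two indicator pieces into \eqref{eq:mean_vec}. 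Fubini is justified by the same absolute bound.

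A more self-contained alternative avoids the Sato citation. Differentiate the exponent in \eqref{eq:opMD_CF} at $\mb z=\mb 0$, using $|e^{it}-1|\le|t|$ and dominated convergence. Alternatively, take expectations in the series \eqref{eq: opMDas_inf_sum}: one has $\mathbb E U^Q=\int_0^1 s^Q\,ds=(I+Q)^{-1}$, which is invertible because $-1$ is not an eigenvalue of $Q\in\mc M_+$. The series then has sum $\sum_{k\ge1}(I+Q)^{-k}\mathbb E\mb W=Q^{-1}\mathbb E\mb W$. This agrees with $\int_0^1 s^{Q-I}\,ds\;\mathbb E\mb W$, which is a useful consistency check.

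\textbf{(ii).} In the same way, \eqref{eq: norm_exp_Q} gives
\begin{equation*}
\int_0^1|s^Q\mb x|^2\frac{ds}{s}\le \frac{c_2^2}{2K_2}|\mb x|^2.
\end{equation*}
This yields $\int_{\Rd}|\mb y|^2M(d\mb y)<\infty$, hence the second moment is finite. The covariance formula $\int \mb y\mb y^T M(d\mb y)$ then gives \eqref{eq:cov_matr} after the change of variables $\mb y=s^Q\mb x$.

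The only delicate point is justifying the interchange of integrals and the differentiation under the integral sign. The integrable majorants $|s^Q\mb x|$ and $|s^Q\mb x|^2$ against $\nu(d\mb x)\,ds/s$ provided by \eqref{eq: norm_exp_Q} take care of this. I expect no real obstacle beyond citing the moment criterion for infinitely divisible laws correctly.
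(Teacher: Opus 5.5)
Your proof is correct. Its analytic input is the same as the paper's: the majorants $\int_0^1|s^Q\mb x|\frac{ds}{s}\le\frac{c_2}{K_2}|\mb x|$ and $\int_0^1|s^Q\mb x|^2\frac{ds}{s}\le\frac{c_2^2}{2K_2}|\mb x|^2$, both obtained from \eqref{eq: norm_exp_Q}. Where you differ is in how these bounds become statements about moments.

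The paper simply differentiates the characteristic function \eqref{eq:opMD_CF} at the origin. Your main route instead reads the moments off the L\'evy triplet of Corollary~\ref{cor: ID_opMD}, using Sato's moment criterion and the mean/covariance formulas for infinitely divisible laws.

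Your route buys genuine rigour in part (i). Differentiability of $\psi$ at $\mb 0$ identifies the mean once it exists, but a first derivative of a characteristic function does not by itself force $\mathbb E|\mb X|<\infty$; only even-order derivatives force the corresponding moments. So the bare differentiation argument needs a separate existence step. This applies both to the paper's argument and to your own ``self-contained alternative''.

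Sato's criterion supplies that step. So does an absolute bound on the series \eqref{eq: opMDas_inf_sum}, namely $\mathbb E|\mb X|\le\sum_{k\ge1}c_2(1+K_2)^{-k}\,\mathbb E|\mb W|=\frac{c_2}{K_2}\mathbb E|\mb W|$. That bound also justifies your term-by-term expectation in the series alternative, which you left implicit.

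For part (ii) the two routes are equivalent, since existence of second derivatives of $\psi$ at $\mb 0$ does give finite second moments. The paper's route is shorter and stays inside the characteristic-function computation. Yours leans on an external theorem but leaves nothing unverified.

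Your computation $\sum_{k\ge1}(I+Q)^{-k}=Q^{-1}$ is also a neat direct derivation of the closed form $\mb m_{Q,\nu}=Q^{-1}\int_{\Rd}\mb x\,\nu(d\mb x)$, which the paper states in the remark after the proposition.
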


\begin{proof}
    Both formulas \eqref{eq:mean_vec} and \eqref{eq:cov_matr} follow easily by differentiation of the characteristic function \eqref{eq:opMD_CF}. Note that the finiteness of the integrals in \eqref{eq:mean_vec} under the assumptions of \eqref{prop:mom_cov: i} follows again from \eqref{eq: norm_exp_Q}.
    Indeed, we can obtain
    \begin{equation*}
        |\mb m|\leq \frac {c_2} {K_2} \int\limits_{\Rd}|\mb x|\nu(d\mb x)<\infty. 
    \end{equation*}
    The finiteness of the integrals in \eqref{eq:cov_matr} follows similarly. 
\end{proof}

\begin{remark}
    Note that it follows from
    \begin{equation*}
        \frac d{dt} e^{tX}=Xe^{tX}=e^{tX}X,\quad t\in \R,~ X\in \mc {M},
    \end{equation*}
    and integration by parts, that formulas \eqref{eq:mean_vec} and \eqref{eq:cov_matr} can be rewritten as 
     \begin{equation*}
    \begin{gathered}
    \mb m_{Q,\nu}=Q^{-1}\int_{\Rd}\mb x \nu(d \mb x)\;\text{ and }\;
        \mathbb C_{Q,\nu}=\mc A_Q^{-1}\left(\int_{\Rd} \mb x \mb x^T \nu(d \mb x)\right),
    \end{gathered}
    \end{equation*}
    where $\mc A_Q: \mc M_d\mapsto \mc M_d$, $\mc A_Q(B)= QB+BQ^*$.
\end{remark}

\begin{proposition}\label{prop: conv}

    \begin{enumerate}[(i)]
        \item \label{prop: conv: i} For any fixed $Q\in \mc M_+$, the class $\mc D(Q, H)$ is closed under transformations $\mb x\mapsto t^A\mb x $ for any $t>0$ and $A\in \mc M$ commuting with Q (i.e. $AQ=QA$). 
        In particular, $\mc D(Q, H)$ is closed under transformations $\mb x\mapsto t^\alpha \mb x $ for any $t>0$ and $\alpha>0$.
        \item \label{prop: conv: ii} Consider the distribution $\mc D(\frac 1 \theta I, \nu)$, $\theta>0$, $\nu\in H$ and assume that the distribution $\nu$ is invariant under the action of a linear operator $T: \Rd\mapsto \Rd$, i.e. $T\nu=\nu$. Then $\mc D(\frac 1 \theta I, \nu)$ is also 
        invariant under  $T$. In particular, if $\nu$ is rotationally invariant,  
        $\mc D(\frac 1 \theta I, \nu)$ is also rotationally invariant for any $\theta>0$.
        
        \item  \label{prop: conv: iii} For any fixed $Q\in \mc M_+$, the class $\mc D(w^{-1}Q, H)$, where $w$ runs over $(0,\infty)$, is closed under finite convolutions.      
    \end{enumerate}
\end{proposition}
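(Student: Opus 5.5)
All three parts will be proved at the level of characteristic functions, using Corollary~\ref{cor: CF_opMD}: $\mc D(Q,\nu)$ has characteristic function $\psi(\mb z)=\exp[\int_0^1(\hat\nu(s^{Q^*}\mb z)-1)\,ds/s]$. Since a distribution is determined by its characteristic function, in each case it suffices to rewrite the transformed (or convolved) characteristic function in this form for a suitable matrix and a suitable measure in $H$.

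For \eqref{prop: conv: i}, let $\mb X\sim\mc D(Q,\nu)$ and put $T=t^A$. Then
\[
\mathbb E e^{i\mb z\cdot T\mb X}=\psi(T^*\mb z)=\exp\Bigl[\int_0^1\bigl(\hat\nu(s^{Q^*}T^*\mb z)-1\bigr)\frac{ds}s\Bigr].
\]
From $AQ=QA$ we get $A^*Q^*=Q^*A^*$, hence $s^{Q^*}$ commutes with $T^*=t^{A^*}$. Since $\hat\nu(T^*\mb y)=\widehat{T\nu}(\mb y)$, the characteristic function equals that of $\mc D(Q,T\nu)$. It remains to check $T\nu\in H$. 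This holds because $|T\mb x|\le\|T\|\,|\mb x|$, so $\log^+|T\mb x|\le\log^+\|T\|+\log^+|\mb x|$. The special case follows by taking $A=\alpha I$.

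An alternative route is the fixed-point equation itself: applying $T$ to \eqref{eq: opMD} and commuting $T$ with $U^Q$ gives $T\mb X\overset d=U^Q(T\mb X'+T\mb W)$. This is a valid argument only because the fixed point is unique, which is the Kesten/Vervaat uniqueness cited earlier. The characteristic-function route avoids that issue.

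For \eqref{prop: conv: ii}, $Q=\theta^{-1}I$ commutes with every $T$. Repeating the computation of part \eqref{prop: conv: i} with an arbitrary linear $T$ gives $T\mc D(\tfrac1\theta I,\nu)=\mc D(\tfrac1\theta I,T\nu)=\mc D(\tfrac1\theta I,\nu)$. Here $T$ need not be of the form $t^A$, and the $H$-check above still works for any linear $T$. Rotational invariance is the special case where $T$ is orthogonal.

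For \eqref{prop: conv: iii}, take $\mc D(w_j^{-1}Q,\nu_j)$, $j=1,\dots,n$. The key identity comes from the substitution $s=u^{w}$, which gives $ds/s=w\,du/u$ and $s^{w^{-1}Q^*}=u^{Q^*}$:
\[
\int_0^1\bigl(\hat\nu(s^{w^{-1}Q^*}\mb z)-1\bigr)\frac{ds}s=w\int_0^1\bigl(\hat\nu(u^{Q^*}\mb z)-1\bigr)\frac{du}u.
\]
Multiplying the characteristic functions and setting $W=\sum_j w_j$, the exponent becomes
\[
\int_0^1\Bigl(\sum_j w_j\hat\nu_j(u^{Q^*}\mb z)-W\Bigr)\frac{du}u=W\int_0^1\bigl(\hat{\bar\nu}(u^{Q^*}\mb z)-1\bigr)\frac{du}u,
\qquad \bar\nu=\sum_j\frac{w_j}{W}\nu_j .
\]
Applying the substitution backwards turns this into the exponent for $\mc D(W^{-1}Q,\bar\nu)$. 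The mixture $\bar\nu$ lies in $H$ because $H$ is convex, and $\bar\nu(\{\mb 0\})=0$ whenever each $\nu_j(\{\mb 0\})=0$.

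The main step I expect to need care is this change of variables: the factor $w$ must be tracked correctly, and $u\mapsto u^{w}$ must be a bijection of $(0,1]$, which holds because $w>0$. Everything else is routine.
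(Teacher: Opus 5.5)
Your proof is correct. Part (iii) is exactly the paper's argument. The substitution $s=u^{w}$ turns the exponent of $\mc D(w^{-1}Q,\nu)$ into $w\int_0^1(\hat\nu(u^{Q^*}\mb z)-1)\,du/u$, and the convolution is then read off as $\mc D(W^{-1}Q,\bar\nu)$. The paper does two factors, you do $n$ at once.

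For (i) and (ii), the paper takes the route you list as the alternative. It applies $t^A$ to \eqref{eq: opMD}, commutes it past $U^Q$ to get $t^A\mb X\overset{d}{=}U^Q(t^A\mb X'+t^A\mb W)$, checks $\mathbb E\log^+|t^A\mb W|<\infty$, and calls (ii) analogous. Your characteristic-function computation is an equally short and valid replacement. Your observation that (ii) needs an arbitrary linear $T$ is a point the paper leaves implicit: reflections have negative determinant, while $\det t^A=t^{\mathrm{tr}A}>0$, so they are not of the form $t^A$.

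Your remark that the characteristic-function route ``avoids'' the uniqueness issue is backwards, though. Uniqueness of the fixed point needs no separate appeal to Kesten or Vervaat. The proof of Proposition~\ref{prop: MD_class_cf} derives \eqref{eq:opMD_CF} from \eqref{eq: opMD} alone, so every solution has that characteristic function. Your route uses exactly this fact, through Corollary~\ref{cor: CF_opMD} applied to $\mc D(Q,T\nu)$. It also needs the existence of $\mc D(Q,T\nu)$, which comes from $T\nu\in H$ and the series \eqref{eq: opMDas_inf_sum}. The fixed-point route exhibits $T\mb X$ as a solution directly, so it needs only the uniqueness part. Both arguments are complete; if anything, the fixed-point one is the leaner of the two.
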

\begin{proof}
        \eqref{prop: conv: i} Let $Q\in \mc M_+$ be fixed and let $\mb X$ be a random vector such that $\mc L(\mb X)\in \mc D(Q, H)$. Then there exists a random vector $\mb W$, $\mc {L}(\mb W)\in H$, such that \eqref{eq: opMD} holds. Therefore, $t^A \mb X$ satisfies  
        \begin{equation*}
            t^A\mb X\overset{d}{=}t^AU^{Q}(\mb X+\mb W)
        \end{equation*}
        for any $t>0$.
        Since $A$ and $Q$ commute, then $s^A$ and $t^Q$ also commute for any $s, t>0$. Thus, we obtain  
        \begin{equation*}
            t^A\mb X\overset{d}{=}U^{Q}(t^A\mb X +t^A\mb W),
        \end{equation*}
        and, moreover,  $\mathbb{E} \log ^+ |t^A\mb W|\leq\mathbb{E}  \log^+ (\|t^A\|\cdot |\mb W|)<\infty$. This implies that $\mc L(t^A\mb X)\in \mc D(Q, H)$ or, more specifically, $t^A\mc D(Q, \nu)=\mc D (Q,t^A\nu)$.
        
        The proof of part \eqref{prop: conv: ii} is analogous.
        
        \eqref{prop: conv: iii} Note first that for fixed $Q\in \mc M_+$, $w>0$  and
         $\mc D=\mc D(w^{-1}Q,\nu)$, from \eqref{eq:opMD_CF} by a change of variables we obtain
          \begin{equation*}
             \log \widehat{\mc D}(\mb z)=\int\limits_{\R^d} \int\limits_0^1(e^{i\mb z\cdot s^{\frac 1 w Q}\mb x}-1)\nu(d\mb x) \frac {ds} s =
             w\int\limits_{\R^d} \int\limits_0^1(e^{i\mb z\cdot s^{Q}\mb x}-1)\nu(d\mb x) \frac {ds} s  .
         \end{equation*}
         Let $Q\in \mc M_+$, $w_1, w_2>0$ and $\nu_1,\nu_2\in H$ and consider  the independent  distributions $\mc D_1=\mc D(w_1^{-1}Q,\nu_1)$, $\mc D_2=\mc D(w_2^{-1}Q,\nu_2)$. If we denote $\mc D=\mc D_1*\mc D_2$, then
         \begin{align*}
            \log \widehat{\mc D}(\mb z)&=\int\limits_{\R^d} \int\limits_0^1(e^{i\mb z\cdot s^Q\mb x}-1) (w_1\nu_1+w_2\nu_2)(d\mb x)\frac {ds} s\\&= (w_1+w_2)\int\limits_{\R^d} \int\limits_0^1(e^{i\mb z\cdot s^Q\mb x}-1)\frac {(w_1\nu_1+w_2\nu_2)(d\mb x)} {w_1+w_2}\frac {ds} s,
         \end{align*}
         and it is easy to see that $\mc D=\mc D(\frac 1 {w_1+w_2}Q, \frac {w_1\nu_1+w_2\nu_2} {w_1+w_2})$.
    \end{proof}
    
\begin{remark}
    The case of countable convolutions is considered in Corollary \ref{cor: inf_conv}.
\end{remark}

Let us now show that each $\mc D\in \mc D(Q, H)$ is $Q$-selfdecomposable. The definition of operator selfdecomposability was first given in \cite{urbanik1972levy}.
 Following \cite{urbanik1972levy}, we say that a measure (distribution) on $\mathbb{R}^d$ is full if its support is not contained in any $(d-1)$-dimensional hyperplane.
 Urbanik proved that each full $Q$-selfdecomposable distribution appears as a weak limit of some triangular array $A_n\sum_{k=1}^n \mb X_k+\mb a_n,$  $n\in\mathbb{N}$, where $\mb X_n,$ $n\geq 1$ is a sequence of independent $\R^d$-valued random variables, $A_n\in\mc M_-$ and $\mb a_n\in \R^d,$ $n\geq 1$. The class of operator selfdecomposable distributions was studied in \cite{urbanik1972levy, sato1984operator, masuda2004multidimensional}, see also \cite{sato1985completely,meerschaert2001limit}  for the subclass of operator-stable distributions and applications in limit theorems.

Let us recall the definition of operator selfdecomposable distributions. 
\begin{definition}[\cite{urbanik1972levy}]\label{def:opSD}
    The distribution $\varrho\in \Pset(\R^d)$ is $Q$-selfdecomposable for some $Q\in\mc M_+$ if for every $t\in \R_+$ there exists $\varrho_t\in \Pset(\R^d)$ such that
\begin{equation*}
    \hat{\varrho}(\mb z)=\hat{\varrho}(e^{-tQ^*}\mb z)\hat{\varrho}_t(\mb z),\quad \mb z \in \R^d.
\end{equation*}
\end{definition}

In \cite{sato1984operator}, the authors presented two useful representations for the characteristic function of the operator selfdecomposable distribution, which allow us to establish the following proposition.
\begin{proposition}\label{prop:MD_opSD} 
    Let $Q\in \mc M_+$  and $\nu\in H$ be given, then $\mc D=\mc D(cQ,\nu)$, is $Q$-selfdecomposable for any $c>0$.
\end{proposition}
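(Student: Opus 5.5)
We prove the statement directly from Definition~\ref{def:opSD}, using the explicit characteristic function in Corollary~\ref{cor: CF_opMD}. By the change of variables in the proof of Proposition~\ref{prop: conv}\eqref{prop: conv: iii}, with $w=1/c$ and $\mc D=\mc D(cQ,\nu)$, we have
\begin{equation*}
\log\psi(\mb z)=\frac 1 c\int\limits_0^1\bigl(\hat\nu(s^{Q^*}\mb z)-1\bigr)\frac{ds}{s}=\frac 1 c\int\limits_0^\infty\bigl(\hat\nu(e^{-uQ^*}\mb z)-1\bigr)\,du,
\end{equation*}
where the second form uses the substitution $s=e^{-u}$. So it suffices to treat a characteristic function of the form $\psi(\mb z)=\exp\bigl[\lambda\int_0^\infty(\hat\nu(e^{-uQ^*}\mb z)-1)du\bigr]$ with $\lambda=1/c>0$.

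Fix $t>0$. Since $e^{-uQ^*}e^{-tQ^*}=e^{-(u+t)Q^*}$, we get
\begin{equation*}
\log\psi(e^{-tQ^*}\mb z)=\lambda\int\limits_t^\infty\bigl(\hat\nu(e^{-uQ^*}\mb z)-1\bigr)du .
\end{equation*}
Therefore the quotient $\psi(\mb z)/\psi(e^{-tQ^*}\mb z)$ equals
\begin{equation*}
\hat\varrho_t(\mb z):=\exp\Bigl[\lambda\int\limits_0^t\bigl(\hat\nu(e^{-uQ^*}\mb z)-1\bigr)du\Bigr]=\exp\Bigl[\lambda\int\limits_{\R^d}\int\limits_0^t\bigl(e^{i\mb z\cdot e^{-uQ}\mb x}-1\bigr)du\,\nu(d\mb x)\Bigr].
\end{equation*}
The splitting is justified because both integrals converge absolutely. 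On $[0,t]$ the integrand is bounded by $2$, and the tail converges by Lemma~\ref{lem1}\eqref{lem1:i} together with $\nu\in H$.

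It remains to show that $\hat\varrho_t$ is a characteristic function, which is the only real point of the proof. The function $\hat\varrho_t$ is the characteristic function of a compound Poisson distribution. Its Lévy measure is
\begin{equation*}
\ell_t(B)=\lambda\int_{\R^d}\int_0^t\mathds 1_B(e^{-uQ}\mb x)\,du\,\nu(d\mb x),
\end{equation*}
which is a finite measure with total mass $\lambda t$; we may drop the point $\mb 0$, since $\nu(\{\mb 0\})=0$. Equivalently, $\varrho_t$ is the law of $\sum_{k=1}^{N}e^{-\tau_kQ}\mb W_k$, where $N$ is Poisson with mean $\lambda t$, the $\tau_k$ are i.i.d.\ uniform on $[0,t]$, the $\mb W_k$ are i.i.d.\ with law $\nu$, and all of these are independent. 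Hence $\varrho_t\in\Pset(\R^d)$ and $\hat\psi(\mb z)=\hat\psi(e^{-tQ^*}\mb z)\hat\varrho_t(\mb z)$ holds for every $t>0$, which is exactly Definition~\ref{def:opSD}.

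Alternatively, one could invoke the Sato--Yamazato representation from \cite{sato1984operator}. In that representation a $Q$-selfdecomposable law corresponds to a Lévy measure $\int_{\R^d}\int_0^\infty\mathds 1_B(e^{-uQ}\mb x)\,du\,\lambda(d\mb x)$ with $\int\log^+|\mb x|\,\lambda(d\mb x)<\infty$. By Corollary~\ref{cor: ID_opMD}, our Lévy measure $M$ has this form with $\lambda=c^{-1}\nu$, and $\nu\in H$ gives the log-moment condition. However, the direct computation above avoids matching the drift and normalization conventions of that paper, so we use it as the main argument.
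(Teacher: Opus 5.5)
Your proof is correct, but it takes a different route from the paper. The paper's proof is a one-line appeal to the second representation theorem of \cite{sato1984operator}, stated for $c=1$ with $\rho=\nu$. That is essentially the alternative you sketch in your last paragraph, where you correctly track the factor $c^{-1}$.

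You instead verify Definition~\ref{def:opSD} directly:
\begin{itemize}
\item After rescaling, the exponent becomes $\lambda\int_0^\infty(\hat\nu(e^{-uQ^*}\mb z)-1)\,du$ with $\lambda=1/c$.
\item The semigroup property $e^{-uQ^*}e^{-tQ^*}=e^{-(u+t)Q^*}$ splits this exponent at $u=t$.
\item The $[0,t]$ piece is visibly the characteristic function of a compound Poisson law, whose finite L\'evy measure has mass $\lambda t$.
\end{itemize}
All steps check out. The full integral converges absolutely by Lemma~\ref{lem1}\eqref{lem1:i} and $\nu\in H$, and Fubini on $[0,t]\times\R^d$ is justified by the bounded integrand.

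Your $\varrho_t$ also has a transparent probabilistic meaning. It is what you get by splitting the shot-noise representation \eqref{eq: shot-noise} (arrivals of rate $1/c$ for $\mc D(cQ,\nu)$) at time $t$. Arrivals after $t$ contribute $e^{-tQ}$ times an independent copy of $\mb X$, and arrivals before $t$ contribute $\sum_{k\le N}e^{-\tau_kQ}\mb W_k$.

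What your route buys:
\begin{itemize}
\item It is self-contained.
\item It gives an explicit cofactor $\varrho_t$, which is itself infinitely divisible.
\item It shows that $\nu\in H$ is needed only for the tail $(t,\infty)$.
\item It avoids reconciling drift and normalisation conventions with Sato--Yamazato.
\end{itemize}
What the citation buys is brevity. It also places $\mc D(cQ,\nu)$ inside the full Sato--Yamazato characterisation, as the $Q$-selfdecomposable law whose background driving law is driftless compound Poisson with L\'evy measure $c^{-1}\nu$. That is more informative than the proposition itself.

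Two notational slips: in your final identity $\hat\psi$ should be $\psi$, and in your last paragraph $\lambda$ is reused for a measure.
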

\begin{proof}
    It follows easily from \eqref{eq:opMD_CF} and representations given in \cite{sato1984operator}. Indeed, for each $\mc D(Q,\nu)$, we can choose $\rho=\nu$ in \cite[Second representation theorem of $Q$-selfdecomposable distributions]{sato1984operator}.
\end{proof}

Note that the classes of $Q$-selfdecomposable Dickman distributions $\mc D(cQ,H)$, $c>0$, might have non-empty intersections even for distinct $Q\in \mc M_+$, as proved in the proposition below. This question for the case of operator stable distributions was studied in \cite{hudson1981operator}.
\begin{proposition}\label{prop:SDMD_intersection}
    Consider $Q\in \mc M_+$ and assume that $Q$ has at least one real eigenvalue. Assume also $Q\notin \mathbb{I}$. Consider now $\mc D = \mc D(Q, \nu)$, with $\nu$ being the probability measure supported on the real eigenspaces of $Q$. Then also $\mc D\in \mc D(\mathbb{I}, H)$ and, consequently, $\mc D$ is both $Q$- and $I$-selfdecomposable.
\end{proposition}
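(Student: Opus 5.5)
The idea is to compute the characteristic function of $\mc D(Q,\nu)$ from Proposition~\ref{prop: MD_class_cf} and rewrite it in the form of $\mc D(\frac1\theta I,\tilde\nu)$ for a suitable $\theta>0$ and $\tilde\nu\in H$. The key observation is that on a real eigenspace the operator $s^Q$ acts as a scalar. If $Q\mb x=\lambda\mb x$ with $\lambda>0$ real, then $s^Q\mb x=s^{\lambda}\mb x$. Since $Q\in\mc M_+$, every real eigenvalue is positive.

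\textbf{Step 1: decomposition of $\nu$.} Let $\lambda_1,\dots,\lambda_m>0$ be the distinct real eigenvalues of $Q$, with eigenspaces $E_j=\ker(Q-\lambda_j I)$. Here I read ``real eigenspaces'' as genuine eigenspaces, not generalized ones. The sets $E_j\setminus\{\mb 0\}$ are disjoint, and we assume $\nu(\{\mb 0\})=0$. So we can write
\[
\nu=\sum_j p_j\nu_j, \qquad \nu_j(E_j)=1,\quad p_j\ge0,\quad \sum_j p_j=1,
\]
and each $\nu_j\in H$. For $\mb x\in E_j$ we have $s^Q\mb x=e^{\lambda_j\log s}\mb x=s^{\lambda_j}\mb x$, by the power series applied to an eigenvector.

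\textbf{Step 2: rescaling.} By Proposition~\ref{prop: MD_class_cf},
\[
\log\psi(\mb z)=\sum_j p_j\int_{E_j}\int_0^1\bigl(e^{i\mb z\cdot s^{\lambda_j}\mb x}-1\bigr)\frac{ds}{s}\,\nu_j(d\mb x).
\]
Substituting $u=s^{\lambda_j}$ gives $ds/s=\lambda_j^{-1}\,du/u$, so the $j$-th term equals
\[
\frac{p_j}{\lambda_j}\int\int_0^1\bigl(e^{i\mb z\cdot u\mb x}-1\bigr)\frac{du}{u}\,\nu_j(d\mb x).
\]
Set $w=\sum_j p_j/\lambda_j$ and $\tilde\nu=w^{-1}\sum_j (p_j/\lambda_j)\nu_j$. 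Then $\tilde\nu$ is a probability measure in $H$, because it is a finite mixture of elements of $H$. The computation in the proof of Proposition~\ref{prop: conv}\eqref{prop: conv: iii} then identifies $\psi$ as the characteristic function of $\mc D(w^{-1}I,\tilde\nu)$.

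Alternatively, one can avoid characteristic functions and use the product $\mc D(Q,\nu)=*_j\,\mc D(Q,p_j\text{-part})$, but the direct rewriting above is cleaner. Since characteristic functions determine the law, $\mc D\in\mc D(\mathbb I,H)$.

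\textbf{Step 3: selfdecomposability.} $Q$-selfdecomposability of $\mc D$ follows from Proposition~\ref{prop:MD_opSD}. $I$-selfdecomposability follows from the same proposition applied to $\mc D(w^{-1}I,\tilde\nu)$, taking $I$ in place of $Q$ and $c=w^{-1}$.

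\textbf{Main obstacle.} The only delicate point is the interpretation of the hypothesis. If ``real eigenspaces'' meant generalized eigenspaces containing Jordan blocks, then $s^Q$ would involve factors $\log s$ and would not be scalar, and the argument would fail. I will therefore state explicitly that $\nu$ is concentrated on $\bigcup_j\ker(Q-\lambda_jI)$. The assumption $Q\notin\mathbb I$ plays no role in the proof; it only ensures that the statement is non-trivial.
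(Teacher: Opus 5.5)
Your proposal is correct and follows essentially the same route as the paper. Both arguments compute the characteristic function via Proposition~\ref{prop: MD_class_cf}, use $s^Q\mb x=s^{\lambda_j}\mb x$ on each eigenspace, substitute $u=s^{\lambda_j}$, and regroup into $\mc D(\theta I,\tilde\nu)$ with $\theta^{-1}=\sum_j p_j/\lambda_j$ and $\tilde\nu\propto\sum_j \lambda_j^{-1}p_j\nu_j$, then invoke Proposition~\ref{prop:MD_opSD}. Your explicit treatment of $\nu(\{\mb 0\})=0$ and of genuine versus generalized eigenspaces makes precise what the paper leaves implicit.
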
 
\begin{proof}
     Let $a_1,\ldots, a_n>0$, $n\geq 1$, denote the distinct real eigenvalues of $Q$ and $A_1, \ldots, A_n$ be the corresponding eigenspaces. Thus, we have $Q\mb x=a_k\mb x$ and  $s^Q\mb x=s^{a_k}\mb x$ for any $\mb x\in A_k$, $k=1,\ldots, n$. Denote also $p_k=\nu(A_k)$, $p_k\geq 0$, $k=1,\ldots, n$  and note that $p_1+\cdots +p_n=1$ by the assumptions above. It follows from  Proposition \ref{prop: MD_class_cf} that
    \begin{align*}
        \hat{\mc D}(\mb z)=&\exp \left[\int\limits_{\R^d} \int\limits_0^1(e^{i\mb z\cdot s^Q\mb x}-1)\nu(d\mb x) \frac {ds} s  \right]\\
        =&\exp \left[\sum_{k=1}^n\int\limits_{A_k} \int\limits_0^1(e^{i\mb z\cdot s^{a_k}\mb x}-1)\nu(d\mb x) \frac {ds} s  \right]\\
        =&\exp \left[\sum_{k=1}^n \int\limits_{A_k}\int\limits_0^1(e^{i\mb z\cdot s\mb x}-1) a_k^{-1 } \nu(d\mb x) \frac {ds} s \right].
    \end{align*}
    Let $\nu_k$ be the restriction of the measure $\nu$ to $A_k$, $k=1,\ldots, n$. Rewrite now
      \begin{align*}
         \hat{\mc D}(\mb z)=&\exp \left[ \int\limits_{\R^d} \int\limits_0^1(e^{i\mb z\cdot s\mb x}-1)  \sum_{k=1}^na_k^{-1 } \nu_k(d\mb x) \frac {ds} s \right]\\
         =&\exp \left[ \int\limits_{\R^d} \int\limits_0^1(e^{i\mb z\cdot s^\theta\mb x}-1)  \tilde{\nu}  (d\mb x)\frac {ds} s\right],
    \end{align*}
    where $\theta^{-1}=\sum_{k=1}^n a_k^{-1 } {p_k} $ and $\tilde{\nu}=\theta \sum_{k=1}^n  a_k^{-1 }{\nu_k} $ is the probability measure on $\Rd$. Consequently, we have $\mc D=\mc D(\theta I, \tilde{\nu})$.
\end{proof}

Let us now consider some specific examples of the distributions in the class of operator Dickman distributions.

\begin{example}\label{example: diag}
Let us fix $\Lambda\in \mc M_+$ and assume that $\Lambda$ is diagonal, that is $\Lambda=diag(a_1,\ldots, a_d)$ for $a_i>0, i=1,\ldots, d$. We will also assume that $\Lambda \notin \mathbb{I}$. Let $\mc L(\mb X)=\mc D(\Lambda, \nu)$ for some $\nu\in H$. Since in this case it holds for $s>0$ that $s^\Lambda=diag(s^{a_1},\ldots, s^{a_d})$, it is clear that the components $X_i$ of the vector $\mb X$ satisfy the distributional equation 
\begin{equation*}
    X_i\overset{d}{=}U^{a_i}(X_i+W_i),\quad i=1, \ldots, d,
\end{equation*}
where $W_i,$ $i=1, \ldots, d$ are components of the random element $\mb W$, $\mc L(\mb W)=\nu$. The random variables of this type have been considered in \cite{ChamayouSchorr1975,vervaat1979stochastic, pinsky18}.
Moreover, the formula \eqref{eq:cov_matr} for the covariance matrix in this case simplifies to
\begin{equation*}
[\mb C_{Q,\nu}]_{i,j}=\frac 1 {a_i+a_j}\int\limits_{\Rd} x_ix_j \nu(d \mb x) ,\quad i,j=1\ldots, d
\end{equation*}
whenever $\int_{\Rd}|\mb x|^2\nu(d\mb x) <\infty$.
Notice also that in the simplest case $\nu=\delta_\mb w$, for some $\mb w=(w_1,\ldots, w_d)\in \Rd_+$, the components $w_iX_i$ are distributed according to the Dickman distribution $GD_{1/a_i}$, $i=1,\ldots,d$.
\end{example}

\begin{example}\label{example: diagonalizable}
    Let us assume now that $Q\in \mc M_+$ can be represented as $Q=S\Lambda S^{-1}$ for $\Lambda$ as in Example \ref{example: diag} and some invertible matrix $S$. Then we have $t^Q=St^\Lambda S^{-1}$ for $t>0$. Let again $\mc L (\mb X)=\mc D(Q,\nu)$, $\nu \in H$, and $\mc L(\mb W)=\nu$. Then it follows from \eqref{eq: opMD}  that 
    \begin{equation*}
        \mb X\overset{d}{=}SU^\Lambda S^{-1}(\mb X+\mb W),
    \end{equation*}
    or, alternatively,
     \begin{equation*}
        S^{-1}\mb X\overset{d}{=}U^\Lambda (S^{-1}\mb X+S^{-1}\mb W),
    \end{equation*}
    and, therefore, $\mc L({S^{-1}\mb X})=\mc D(\Lambda, S^{-1}\nu)$, cf. Example \ref{example: diag}. 
\end{example}

\begin{example}\label{ex:S_meas}
Consider $\mc D=\mc D(\frac 1 \theta I, \nu)$, $\theta>0$, and assume $\nu \in \mc S$. As was noted before, the random vector $\mb X$ with $\mc L(\mb X)=\mc D$ has the multidimensional Dickman distribution as defined in \cite{bhattacharjee2020convergence}. The properties of this distribution have been studied in \cite{grabchak2024representation}, see also \cite{grabchak2024smalljumpslevyprocesses, grabchak2026simulation}.
In particular, it was established that if the measure $\nu$ is finitely supported, then $\mb X$ can be expressed using independent Dickman random variables. Indeed, 
assume that $\nu$ is of the form
    \begin{equation}\label{e:finitemeasure}
        \nu=\sum\limits_{i=1}^n p_i\delta_{\mb{w}_i},
    \end{equation}
where $\mb w_1, \ldots, \mb w_n\in \Sd$ and $p_1,\ldots, p_n>0$, $p_1+\cdots+ p_n=1$, $n\geq 1$. Then $\mb X$ can be represented as 
\begin{equation}\label{eq:finite_sum_XMD}
    \mb X\overset{d}{=} \sum\limits_{i=1}^n \mb w_i X_D^{(i)},
\end{equation}
where $X_D^{(1)}, \ldots, X_D^{(n)}$ are independent random variables and  $X_D^{(i)}$ is distributed according to $GD_{\theta p_i}$, $i=1, \ldots, n$.
The representation \eqref{eq:finite_sum_XMD} was obtained in \cite{grabchak2024representation}, and it stays valid for $\nu\in H$.
Moreover, the representation \eqref{eq:finite_sum_XMD} can be used for simulation from the multivariate Dickman distribution $\mc D(\frac 1 \theta I,\nu)$ as discussed in \cite{grabchak2024representation}. 

Recall also that the distribution $\mc D(Q,\nu)$ can be simplified to the distribution $\mc D( \theta I,\Tilde{\nu})$ with some $\theta>0$, $\Tilde{\nu}\in H$, in case probability measure $\nu$ is supported on real eigenspaces of the matrix $Q$, see Proposition \ref{prop:SDMD_intersection}.
\end{example}

Notice that if the operator selfdecomposable distribution over $\Rd$ is full, then it is absolutely continuous, see \cite{YAMAZATO1983550}.
Nevertheless, an explicit expression for the density is generally difficult to obtain.
In what follows, we focus on obtaining relations for the density function of the operator Dickman distributions in some particular cases.

In Theorem \ref{thm: unif_dens}, the explicit formula for the density is obtained for a specific case of the multidimensional Dickman distribution $\mc D\in \mc D(\mathbb I,\mc S)$, whereas Proposition \ref{prop: MD_diff_eq} treats a more general case of $\mc D(\mathbb I,H)$. Note that we utilise some elements of Fourier analysis and the theory of generalised functions operating on the Schwartz space of rapidly decreasing functions. For more details on these topics, we refer the reader to the classic books \cite{stein1971introduction}, \cite{gelfand1969generalized} and also \cite{mitrea2013distributions}. In this manner, we denote below $\Shw$, the space of Schwartz functions on $\Rd$ and $\Shw'$, its dual space, the space of tempered distributions. Moreover, we write $\overset{\Shw'}{\to}$ to indicate the convergence in the sense of $\Shw'$.

First, we consider the multidimensional Dickman distribution $\mc D(\frac 1 \theta I, \nu)$, where the distribution $\nu$ is uniform on the sphere $\Sd$. In the following theorem, we obtain its density function, which can be viewed as the multivariate analogue of the density $f_\theta(\cdot)$, see Section \ref{sec:Univ_D}. The idea of the proof is due to \cite[Thm. 4.7.7]{vervaat1972success}.

We further denote $J_a(\cdot), a>-\frac 1 2$, the Bessel function of first kind, $Leb$ the Lebesgue measure on $\Sd$,  $\zeta_d=(Leb(\Sd))^{-1}=\frac {\Gamma(d/2)} {2\pi^{d/2}}$, the reciprocal surface area of $\Sd$ and $Y_d(s)=( s/ 2)^{1-\frac d 2}\Gamma( d/ 2 )J_{ d/ 2-1}(s)$, $s>0$,  $d>1$, the spherical Bessel function, see e.g. \cite{pinsky1993fourier}. Moreover, for $d=1$, we let $Y_1(s)=\cos (s), s>0$.

\begin{theorem}\label{thm: unif_dens}
    Let $d\geq  1$, $\theta>0, \theta\neq d+2n, n=0,1,\ldots$, and consider the distribution $\mc D=\mc D(\frac 1 \theta I, \nu)$.
    Assume also that the measure $\nu$ is uniform on $\Sd$, i.e. $\nu=\zeta_d Leb$. If we let $\mc L(\mb X)=\mc D$, then the density $f(\mb x)$, $\mb x  \in \Rd$, of $\mb X$ exists and can be given by  
\begin{equation}\label{eq: unif_dens}
    f(\mb x)=e^{\theta\alpha_d}\left(g(\mb x)+\sum_{n=1}^\infty\frac {(-\theta \zeta_d)^n} {n!} \idotsint\limits_{D_n} g(\mb x-\mb x_1-\cdots -\mb x_n)\frac {d\mb x_1\ldots d\mb x_n}{ |\mb x_1|^d\ldots |\mb x_n|^d}\right),
\end{equation}
where 
\begin{gather}\label{eq:alpha_d}
    \alpha_d= \int\limits_0^1 \left(Y_d(s)-1\right)\frac {ds} s +\int\limits_1^{\infty}  Y_d(s)\frac {ds} s<\infty,\\
    g(\mb y)=  2^{-\theta} \pi^{-d/2}\frac {\Gamma(\frac{d-\theta} 2)} {\Gamma(\frac{\theta} 2)}|\mb y|^{\theta-d}, \quad \mb y\in \Rd,\nonumber\\
    D_n=\left\{(\mb x_1, \ldots, \mb x_n)\in \R^{d\times n}: |\mb x_i|>1,~ i=1, \ldots, n\right\} \nonumber.
\end{gather}
The formula \eqref{eq: unif_dens} is understood as a limit in the space of tempered distributions $\Shw'$ as given in \eqref{eq: f_lim_conv} below.

\end{theorem}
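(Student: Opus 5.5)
We follow the idea of \cite[Thm. 4.7.7]{vervaat1972success} and work at the level of characteristic functions, splitting the L\'evy measure into a part near the origin, which produces a Riesz-potential-type term, and a part outside the unit ball, which is a finite measure and generates a compound-Poisson-type series.

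\emph{Step 1: the L\'evy measure.} By Corollary \ref{cor: ID_opMD} with $Q=\frac1\theta I$ and $\nu=\zeta_d Leb$, the L\'evy measure is $M(B)=\theta\int_{\Sd}\int_0^\infty \mathds 1_B(r\mb w)\mathds 1_{(0,1)}(r)\,\frac{dr}{r}\,\nu(d\mb w)$. In Cartesian coordinates this is $M(d\mb y)=\theta\zeta_d|\mb y|^{-d}\mathds 1_{|\mb y|<1}\,d\mb y$.

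\emph{Step 2: rewrite $\log\psi$.} Write
\begin{equation*}
\log\psi(\mb z)=\theta\zeta_d\int_{\Rd}(e^{i\mb z\cdot\mb y}-1)\frac{d\mb y}{|\mb y|^d}-\theta\zeta_d\int_{|\mb y|>1}(e^{i\mb z\cdot\mb y}-1)\frac{d\mb y}{|\mb y|^d}.
\end{equation*}
The first integral diverges at infinity, so it must be interpreted with care. Using spherical averages $\zeta_d\int_{\Sd}e^{i\mb z\cdot r\mb w}Leb(d\mb w)=Y_d(r|\mb z|)$, we get
\begin{equation*}
\log\psi(\mb z)=\theta\int_0^1(Y_d(r|\mb z|)-1)\frac{dr}r
=\theta\Big[\alpha_d-\log|\mb z|-\int_{|\mb z|}^\infty Y_d(s)\frac{ds}{s}\Big].
\end{equation*}
Here $\alpha_d$ is finite because $Y_d(s)=O(s^{-(d-1)/2})$ oscillates; for $d\ge2$ this is absolutely convergent, and for $d=1$ it is the convergent cosine integral. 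The last term equals $\zeta_d\int_{|\mb y|>1}e^{i\mb z\cdot\mb y}|\mb y|^{-d}d\mb y$ after rescaling, understood as a tempered distribution (conditionally convergent, improper integral). Hence
\begin{equation*}
\psi(\mb z)=e^{\theta\alpha_d}|\mb z|^{-\theta}\exp\big(-\theta\zeta_d\,\hat h(\mb z)\big),\qquad h(\mb y)=|\mb y|^{-d}\mathds 1_{|\mb y|>1}.
\end{equation*}

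\emph{Step 3: expand and invert.} Expand $\exp(-\theta\zeta_d\hat h)=\sum_n\frac{(-\theta\zeta_d)^n}{n!}\hat h^n$. The factor $|\mb z|^{-\theta}$ is the Fourier transform of the Riesz kernel $g$, with the stated constant $2^{-\theta}\pi^{-d/2}\Gamma(\frac{d-\theta}2)/\Gamma(\frac\theta2)$ (see \cite{stein1971introduction,gelfand1969generalized}). This is valid as a tempered distribution precisely when $\theta\neq d+2n$, which explains the exclusion of these poles; for $\theta<d$ it is locally integrable, otherwise one uses the meromorphic continuation. Each product $|\mb z|^{-\theta}\hat h^n$ inverts to the convolution $g*h^{*n}$, which is exactly the $n$-fold integral over $D_n$ in \eqref{eq: unif_dens}.

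\emph{Main obstacle.} The main difficulty is justification. Neither $h$ nor $g$ is integrable, so the convolutions and the series must be handled in $\Shw'$. I would introduce a cutoff $h_R=h\mathds 1_{|\mb y|<R}\in L^1$, for which everything is an honest compound-Poisson computation, since $\exp(-\theta\zeta_d\hat h_R)$ is an entire series in a finite measure, and then let $R\to\infty$. The required estimates are a uniform bound on $\hat h_R$ (it grows only like $\log$ near $\mb z=\mb 0$), dominated convergence against Schwartz test functions to pass to the limit in $\Shw'$, and a check that $g*h_R^{*n}$ converges to the integral over $D_n$. Absolute continuity of $\mc D$ itself follows from fullness and \cite{YAMAZATO1983550}, so the identification of $f$ with this limit is legitimate.
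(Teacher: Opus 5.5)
Your plan follows the paper's proof step for step. It uses the same reduction $\hat f(\mb z)=e^{\theta\alpha_d}|\mb z|^{-\theta}H(\mb z)$ with $H(\mb z)=\exp\bigl(-\theta\int_1^\infty Y_d(s|\mb z|)\tfrac{ds}{s}\bigr)$, and the same cutoff (the paper's $F_k(\mb x)=|\mb x|^{-d}\mathds 1\{1<|\mb x|<k\}$ is your $h_R$). It then takes the Riesz kernel $g$ from the Gelfand--Shilov continuation and passes to a limit in $\Shw'$. The paper additionally truncates the exponential series at $N$, so that $h_{k,N}$ is compactly supported. Convolving $g$ with the full compound-Poisson signed measure, as you suggest, works just as well, because that measure has exponential moments.

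Three items in your list of estimates should be corrected. Write $H_R=\exp(-\theta\zeta_d\hat h_R)$.

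(i) The bound that feeds dominated convergence is one-sided. You need $\sup_{R,\mb z}H_R(\mb z)<\infty$, i.e.\ a lower bound on $\hat h_R$ uniform in $R$. This holds because $\zeta_d\hat h_R(\mb z)=\int_{|\mb z|}^{R|\mb z|}Y_d(u)\tfrac{du}{u}$, $Y_d>0$ on $[0,1]$, and $T\mapsto\int_1^T Y_d(u)\tfrac{du}{u}$ is bounded; this is the paper's ``$H_k$ uniformly bounded''. The logarithmic growth of $\hat h_R$ near $\mb 0$ is upward and harmless. A two-sided bound $|\hat h_R(\mb z)|\lesssim 1+\log^+(1/|\mb z|)$ would only give the dominating function $|\mb z|^{-2\theta}$.

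(ii) For $\theta\ge d$, even $C|\mb z|^{-\theta}$ is not locally integrable, so dominated convergence cannot be applied to the regularised $|\mb z|^{-\theta}$ as it stands. You must write the regularisation explicitly: it subtracts even-order Taylor terms of $H_R\phi$ at $\mb 0$ of order $<\theta-d$. Then use $H_R(\mb z)\le C|\mb z|^{\theta}$ for $R^{-1}\le|\mb z|\le 1$ together with $H_R(\mb z)=R^{-\theta}\bigl(1+O(R^2|\mb z|^2)\bigr)$ for $|\mb z|\le R^{-1}$. These make all correction terms vanish as $R\to\infty$. The paper's domination by $C_1'|\mb z|^{-\theta}$ near the origin glosses over this point as well.

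(iii) Do not try to show that $g*h_R^{*n}$ converges to the $D_n$-integral. For $\theta>d$ one has $g(\mb y)=c|\mb y|^{\theta-d}$ with $c\neq 0$, and already $\int_{|\mb x_1|>1}|\mb x-\mb x_1|^{\theta-d}|\mb x_1|^{-d}\,d\mb x_1=\infty$. The individual terms of \eqref{eq: unif_dens} are then only formal. That is why the theorem asserts just the iterated $\Shw'$ limit \eqref{eq: f_lim_conv}, and that limit is all you should (and can) prove.
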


\begin{proof}
Note first that, according to the discussion above, the density $f(\mb x)$ exists and is well-defined almost everywhere on $\Rd$. Thus, we can identify 
$\hat {\mc D}(\mb z)=\hat{f}(\mb z)$, $\mb z\in \Rd$.

Let us consider the function $\varphi(\mb z)=\log \hat{f}(\mb z)$. Using Corollary \ref{cor: CF_opMD}, we obtain
\begin{equation*}
     \varphi(\mb z)=\theta \int_0^1(\hat{\nu}(s\mb z)-1)\frac {ds} s.
\end{equation*} 
Moreover, for $\nu=\zeta_d Leb(B)$ we have 
\begin{equation*}
     \hat{\nu}(\mb w)= Y_d(|\mb w|),\quad \mb w\in \Rd,
\end{equation*}
see, for example, \cite{pinsky1993fourier}.
Therefore, we obtain for $\mb z\in \R^d\setminus\{\mb 0\}$ that
\begin{align*}
    \theta^{-1}\varphi(\mb z)&= \int\limits_0^{1} \left(Y_d(s |\mb z |)-1 \right ) \frac {ds} s=
     \int\limits_0^{|\mb z |} \left( Y_d(s) -1 \right ) \frac {ds} s \\
    &=\int\limits_0^1\left(  Y_d(s) -1 \right )\frac {ds} s +\int\limits_1^{\infty} Y_d(s) \frac {ds} {s} - \int\limits_{|\mb z |}^\infty Y_d(s) \frac {ds} {s}-\log|\mb z | .
\end{align*}
Since the function $Y_d(\cdot)$ satisfies
\begin{equation}\label{Y_d_as}
    \begin{gathered}
         Y_d(s)=1+O(s^2), \; s\to 0;\\
    Y_d(s)=\left(\frac s 2 \right)^{\frac {1-d}2 }\Gamma\left(\frac d 2\right)\pi^{-1/2}\left[\cos\left(s-\frac {\pi(d-1)} 4\right)+O\left(\frac 1 s\right)\right], \; s\to\infty,
    \end{gathered}
\end{equation}
we get \eqref{eq:alpha_d}; see \cite[Prop. 2.1]{pinsky1993fourier}.

Consequently, we can rewrite
\begin{align}
    \hat{f}(\mb z)=&\exp\left(\theta\alpha_d+\log|\mb z |^{-\theta}-\theta  \int\limits_{1}^\infty  Y_d(s|\mb z |)\frac{ds} s \right)\nonumber\\
    =&e^{\theta\alpha_d} |\mb z|^{-\theta} \exp\left(-\theta \int\limits_{1}^\infty  Y_d(s|\mb z |)\frac{ds} s \right)\label{eq: fhat_repr}.
\end{align}

We note first that for $0<\theta<d$, the function $|\mb z|^{-\theta}$, $\mb z\in \Rd\setminus\{\mb 0\}$, defines a tempered distribution, which admits analytical continuation for all $\theta>0$, $\theta\neq d+2n$, $n=0,1,\ldots$; see \cite[Ch.~I]{gelfand1969generalized}. Additionally, its inverse Fourier transform in the sense of $\Shw'$ is given by the tempered distribution $g$ defined above, as shown in \cite[p.~194]{gelfand1969generalized}.

Denote now $\Tilde{F}_k(s)=s^{-d}\mathds{1} \{1<s<k\}$,  $k\geq2$,  and $\Tilde{F}(s)=s^{-d}\mathds{1} \{s>1\}$, $s\in \R$. Let also $F_k(\mb x)=\Tilde{F}_k(|\mb x|)$ and $F(\mb x)=\Tilde{F}(|\mb x|)$, $\mb x \in \R^d$. Note that $\{F_k, k\geq 2\}$ is a sequence of functions such that 
\begin{enumerate}
    \item $F_k(\mb x) \in L_1(\Rd)\cap L_2(\Rd)$,
    \item $F_k(\mb x) \overset{L_2(\Rd)}{\to}F(\mb x)$ as $k\to \infty$,
    \item $F_k$ is radial for any $k\geq 2$ and 
    \begin{align*}
        &\int_{\Rd}e^{i\mb z\cdot \mb x}F_k(\mb x)d\mb x =(2\pi)^{d/2}|\mb z|^{1-d/2 }\int_0^\infty \Tilde{F}_k(s)J_{d/2-1}(s|\mb z|)s^{d/2}ds\\
        =& \zeta_d^{-1}\int_0^\infty \Tilde{F}_k(s)Y_d(s|\mb z|)s^{d-1}ds= \zeta_d^{-1}\int_1^k Y_d(s|\mb z|)\frac {ds} s, \quad \mb z\in \Rd, 
    \end{align*}
    see \cite{stein1971introduction}. Note that for $d=1$ this relation is obvious due to properties of even functions.
\end{enumerate}
Thus, the function $-\theta \int\nolimits_{1}^\infty  Y_d(s|\mb z |)\frac{ds} s$ is the Fourier transform of the function $-\theta\zeta_d F(\mb x)\in L_2(\Rd)$; see \cite{stein1971introduction} for more details on the theory of the Fourier transform on $L_2(\Rd)$.

Subsequently, we denote for $k\geq 2$ and $\mb z\in \Rd$
\begin{equation*}
    H_k(\mb z)=\exp\left(-\theta \int_{1}^k  Y_d(s|\mb z |)\frac{ds} s \right)=\exp\left(-\theta \zeta_d \int_{\Rd} e^{i\mb z\cdot \mb x} |\mb x|^{-d}  \mathds 1_{(1,k)}(|\mb x|)d\mb x \right).
\end{equation*}
We first note that from \eqref{Y_d_as} it is easy to see that $H_k(\mb z)\to H(\mb z)$ as $k\to\infty$ for all $\mb z\in \Rd\setminus\{\mb 0\}$. Additionally, 
for $k$ large enough and $\mb z\in \Rd$ away from $\{\mb 0\}$, it holds that
\begin{equation*}
    \left| |\mb z|^{-\theta}H(\mb z)-|\mb z|^{-\theta}H_k(\mb z) \right|=
    |\mb z|^{-\theta}H(\mb z) \cdot   \left| 1-e^{\theta \int_{k}^\infty  Y_d(s|\mb z |)\frac{ds} s }\right|\leq C_1 \hat f(\mb z),
\end{equation*}
where $C_1$ is some positive constant, and the last inequality follows again by \eqref{Y_d_as}.
Whereas, since functions  $H(\mb z)$ and $H_k(\mb z)$, $k\geq 2$, are uniformly bounded for small $|\mb z|$, for $k\geq 2$ and $\mb z$ in the neighbourhood of $\{\mb 0\}$ it holds that 
\begin{equation*}
     \left| |\mb z|^{-\theta}H(\mb z)-|\mb z|^{-\theta}H_k(\mb z) \right|\leq C_1' |\mb z|^{-\theta},
\end{equation*}
where $C_1'>0$. 

We can then conclude, by the dominated convergence theorem, that
\begin{equation}\label{eq: H_k_to_H}
    |\mb z|^{-\theta}H_k(\mb z)\overset{\Shw'}{\to}|\mb z|^{-\theta}H(\mb z), \quad k\to \infty.
\end{equation} 

Note also that for any $k\geq 2$, the function $\int_{1}^k  Y_d(s|\mb z |)\frac{ds} s=\zeta_d \int_{\Rd} e^{i\mb z\cdot \mb x} F_k(\mb x)d\mb x$ is uniformly bounded for $\mb z\in \Rd$ as can be derived from \cite[Eq. (3.1.2)]{mitrea2013distributions}. It then follows that 
\begin{align*}
   H_k(\mb z)=&\lim_{N\to\infty}\sum_{n=0}^N\frac {(-\theta\zeta_d)^n} {n!} \left(\int_{\Rd} e^{i\mb z\cdot \mb x}F_k(\mb x)d\mb x\right)^n\\
   =&\lim_{N\to\infty}\sum_{n=0}^N\frac {(-\theta\zeta_d)^n} {n!} \int_{D_{n,k}} e^{i\mb z\cdot (\mb x_1+\cdots+\mb x_n)}\frac {d\mb x_1\cdots d\mb x_n} { |\mb x_1|^d\ldots |\mb x_n|^d} \\
   =&\lim_{N\to\infty}\int_{\Rd} e^{i\mb z\cdot\mb x }\left( \sum_{n=0}^N\frac {(-\theta\zeta_d)^n} {n!} F_k^{(*n)}(\mb x) \right)d\mb x\eqqcolon \lim_{N\to\infty} H_{k,N}(\mb z),
\end{align*}
where $D_{n,k}=\left\{(\mb x_1, \ldots, \mb x_n)\in \R^{d\times n}: 1<|\mb x_i|<k,~ i=1, \ldots, n\right\}$, $k\geq 2$, $n\in \mathbb N$, and $^{(*n)}$ denotes the $n$-fold convolution, $F^{(*0)}(\cdot)\coloneqq\delta_{\mb 0}(\cdot)$. The convergence above is uniform for $\mb z\in \Rd$.
Consequently, there exists $N$ large enough such that, for some constant $C_2>0$ it holds that
\begin{equation*}
     \left| |\mb z|^{-\theta}H_k(\mb z)-|\mb z|^{-\theta}H_{k,N}(\mb z) \right|\leq C_2|\mb z|^{-\theta}, \quad |\mb z|\in \Rd\setminus\{\mb 0\}.
\end{equation*}
Hence, by the same reasoning as above, 
$|\mb z|^{-\theta}H_{k,N}(\mb z)\overset{\Shw'}{\to}|\mb z|^{-\theta}H_{k}(\mb z)$, $N\to\infty$. Combining it with \eqref{eq: H_k_to_H}, we deduce that
\begin{equation}\label{eq: H_kN_toH}
    \lim_{k\to \infty}\lim_{N\to \infty}|\mb z|^{-\theta}H_{k,N}(\mb z)=|\mb z|^{-\theta}H(\mb z),
\end{equation}
where the limit is understood in the sense of $\Shw'$.

Note now that it follows from above that for each $k\geq 2$ and $N\geq 0$, the tempered distribution $H_{k,N}$ is the Fourier transform of a distribution $h_{k,N}$ given by 
\begin{equation*}
    h_{k,N}(\mb x) =\sum_{n=0}^N\frac {(-\theta\zeta_d)^n} {n!} F_k^{(*n)}(\mb x), \quad \mb x\in \Rd.
\end{equation*}
Moreover, the tempered distribution $h_{k,N}(\mb x)$ is compactly supported for each $k\geq 2$ and $N\geq 0$; see \cite[Ch. 2]{mitrea2013distributions} for the definition and further properties.

Thus, the distribution $|\mb z|^{-\theta}H_{k,N}(\mb z)$ can be written as the Fourier transform on the space $\Shw'$ of the convolution \begin{equation*}
     (g*h_{k,N})(\mb x)=g(\mb x)+\sum_{n=0}^N\frac {(-\theta\zeta_d)^n} {n!} \int_{\Rd}g(\mb y)F_k^{(*n)}(\mb x-\mb y)d\mb y,\quad \mb x\in \Rd\setminus\{\mb 0\},
\end{equation*}
see \cite[Thm. 4.33]{mitrea2013distributions}.
Moreover, by the continuity of the Fourier transform on the space $\Shw'$, combined with \eqref{eq: H_kN_toH} and \eqref{eq: fhat_repr}, it follows that 
\begin{equation}\label{eq: f_lim_conv}
    f(\mb x)=\lim_{k\to \infty}\lim_{N\to \infty} e^{\theta \alpha_d} (g*h_{k,N})(\mb x), \quad \mb x\in \Rd,
\end{equation}
where the limit above is in $\Shw'$ sense. This completes the proof and gives a rigorous meaning to the formula \eqref{eq: unif_dens}.
\end{proof}

\begin{remark}
    Note also that for $d=1$, relation  \eqref{eq: unif_dens} yields a formula for the convolution $f_{\theta/2}(x)*f_{\theta/2}(-x)$, $\theta>0, \theta\neq 1+2n$, $n\geq 0$, where $f_{\theta/2}(x)$ is the density of the one-dimensional Dickman distribution $GD_{\theta/2}$. Moreover, it easily follows from \cite[Formula 3.782(1)]{gradshteyn2014table} that $\alpha_1=\gamma$, cf. Section \ref{sec:Univ_D}. 

    Notice also that the restriction $\theta\neq d+2n$, $n\geq 0$, arises from the analytical techniques used in the proofs, rather than from any structural property of the distribution $\mc D(\frac 1 \theta I,\nu)$.  In fact, the density $f(\mb x)$ of $\mc D(\frac 1 \theta I,\nu)$ exists for any $\theta>0$, and the expression \eqref{eq: unif_dens} remains valid except for values $\theta= d+2n$, $n\geq  0$.
\end{remark}

Observe also that from \eqref{eq: fhat_repr} combined with \cite[Prop. 28.1]{sato1999levy}, we obtain the following corollary.
\begin{corollary}
    Assume that the assumptions of Theorem \ref{thm: unif_dens} are satisfied for $d\geq 1$ and $\theta>0$. Then the characteristic function $\hat f(\mb z)$ of the operator Dickman distribution $\mc D (\frac 1 \theta I, \nu)$ satisfies 
    \begin{equation*}
        \hat f(\mb z)\sim e^{\theta \alpha_d} |\mb z|^{-\theta}, \quad |\mb z|\to \infty.
    \end{equation*}
    This, in turn, implies that all partial derivatives of $f(\mb x)$, $\mb x\in \Rd$, up to order $n\geq 1$, exist and are continuous whenever $\theta>d+n$.
\end{corollary}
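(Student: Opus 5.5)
The corollary has two parts: the asymptotic relation for $\hat f$, and smoothness of $f$. The asymptotic relation comes from the representation \eqref{eq: fhat_repr},
\begin{equation*}
\hat f(\mb z)=e^{\theta\alpha_d}|\mb z|^{-\theta}\exp\left(-\theta\int_1^\infty Y_d(s|\mb z|)\frac{ds}{s}\right)=e^{\theta\alpha_d}|\mb z|^{-\theta}\exp\left(-\theta\int_{|\mb z|}^\infty Y_d(u)\frac{du}{u}\right),
\end{equation*}
obtained by the substitution $u=s|\mb z|$. It therefore suffices to show that the tail integral $R(r)=\int_r^\infty Y_d(u)\,du/u$ tends to $0$ as $r\to\infty$. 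Then $\exp(-\theta R(|\mb z|))\to1$, which gives $\hat f(\mb z)\sim e^{\theta\alpha_d}|\mb z|^{-\theta}$.

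That $R(r)\to0$ is just the Cauchy criterion for the convergent improper integral $\int_1^\infty Y_d(u)\,du/u$, whose convergence was already established in proving \eqref{eq:alpha_d}. An explicit rate is also available from the asymptotics \eqref{Y_d_as}. The main part is $u^{(1-d)/2}\cos(u-\pi(d-1)/4)/u$, which has bounded primitive times a decreasing factor, so integration by parts gives a contribution of order $r^{-(d+1)/2}$. The remainder is $O(u^{-(d+1)/2}/u)$, which is integrable and also $O(r^{-(d+1)/2})$. For $d=1$, $Y_1=\cos$ and the bound $R(r)=O(1/r)$ follows directly.

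The smoothness statement uses \cite[Prop.~28.1]{sato1999levy}: if $\int_{\Rd}|\hat\mu(\mb z)|\,|\mb z|^n\,d\mb z<\infty$, then $\mu$ has a density of class $C^n$, whose derivatives are given by differentiating the Fourier inversion formula. Here $\hat f$ is a characteristic function, so $|\hat f|\le1$ near the origin. For large $|\mb z|$, the asymptotic relation gives $|\hat f(\mb z)|\le C|\mb z|^{-\theta}$. Passing to polar coordinates, the integral over $|\mb z|>1$ is bounded by $C\int_1^\infty r^{n-\theta}r^{d-1}\,dr$, which is finite exactly when $\theta>d+n$. Hence all partial derivatives up to order $n$ exist and are continuous. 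Since $\theta>d+n$ is exactly the integrability threshold here, I would also keep the restriction $\theta\neq d+2m$ from Theorem~\ref{thm: unif_dens} only where \eqref{eq: fhat_repr} is invoked.

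The one point needing care, and the main obstacle, is that \eqref{eq: fhat_repr} was derived for $\mb z\neq\mb 0$ from Corollary~\ref{cor: CF_opMD} alone, independently of the tempered-distribution argument. Because of this, the excluded values $\theta=d+2m$ play no role in the corollary, and the proof can be made to hold for all $\theta>0$. I would note this explicitly rather than rely on the analytic continuation of $|\mb z|^{-\theta}$.
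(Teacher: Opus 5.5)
Your proposal is correct and follows the paper's own one-line argument. The asymptotic relation is read off from \eqref{eq: fhat_repr} once you note that the tail $\int_{|\mb z|}^\infty Y_d(u)\,du/u$ of the convergent integral defining $\alpha_d$ vanishes, and smoothness then follows from \cite[Prop.~28.1]{sato1999levy}, since $|\mb z|^n|\hat f(\mb z)|$ is integrable when $\theta>d+n$; your observation that \eqref{eq: fhat_repr} does not use the restriction $\theta\neq d+2m$ agrees with the paper's remark after Theorem~\ref{thm: unif_dens}.
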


The following proposition establishes the integro-partial differential equation for the density of distributions in $\mc D(\mathbb{I}, H)$. It thus gives a multivariate counterpart for the equation \eqref{eq: Dickman_ODE}, see also \cite[Lem. 3]{ChamayouSchorr1975}. Our result, however,  holds in a weak sense, treating the density as a tempered distribution. See \cite{mitrea2013distributions} for a definition of weak derivative.

\begin{proposition}\label{prop: MD_diff_eq}
Let $\theta>0$ and $\nu\in H$ be given. Let $f(\mb x)$, $\mb x\in \Rd$, denote the probability density function (given as a generalised function) of the $\mc D(\frac 1 \theta I, \nu)$  distribution. Then it satisfies the integro-partial differential equation 
\begin{equation*}   
\sum_{k=1}^d x_k\frac { \partial f(\mb x)} {\partial x_k} =(\theta-d)f(\mb x)-\theta\int\limits_{\Rd}f(\mb x-\mb y)\nu(d\mb y)
\end{equation*}
in a weak sense.
\end{proposition}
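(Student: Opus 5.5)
The plan is to work on the Fourier side. By Corollary~\ref{cor: CF_opMD} with $Q=\frac1\theta I$, and after the substitution $s\mapsto s^\theta$, the characteristic function is
\[
\psi(\mb z)=\hat f(\mb z)=\exp\Big[\theta\int_0^1(\hat\nu(s\mb z)-1)\frac{ds}{s}\Big].
\]
I will derive a first-order PDE for $\psi$ in the radial direction, $\mb z\cdot\nabla\psi$, and then transfer it to $f$ by the standard correspondence between multiplication by $z_k$, differentiation, and Fourier transforms in $\Shw'$. The argument in the proof of Proposition~\ref{prop: MD_class_cf} already gives $\frac{\partial}{\partial r}\log\psi(r\bom)=\frac{\theta}{r}\big(\hat\nu(r\bom)-1\big)$, i.e. for $Q^*=\frac1\theta I$ the relation written in the variable $r^\theta$. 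Evaluating at $r=1$ and using $\frac{d}{dr}\psi(r\bom)|_{r=1}=\bom\cdot\nabla\psi(\bom)$, I obtain
\[
\sum_{k=1}^d z_k\frac{\partial\psi(\mb z)}{\partial z_k}=\theta\big(\hat\nu(\mb z)-1\big)\psi(\mb z),\qquad \mb z\neq\mb 0 .
\]
Differentiability of $\psi$ along rays follows from the integral representation, since $\int_0^r(\hat\nu(s\bom)-1)\frac{ds}{s}$ is differentiable in $r$ for $r>0$. For the full gradient, I will differentiate under the integral, $\partial_{z_k}\int_0^1(\hat\nu(s\mb z)-1)\frac{ds}{s}$, and observe that $\mb z\cdot\nabla$ of this integral equals $\int_0^1 s\frac{d}{ds}\hat\nu(s\mb z)\frac{ds}{s}=\hat\nu(\mb z)-1$. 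To avoid assuming that $\nu$ has a first moment, I will establish the identity directly in $\Shw'$: pair both sides with a test function and use the homogeneity/dilation structure, namely $\psi(r\mb z)$ and the Euler operator as the generator of dilations.

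Next I translate this into the statement about $f$. For a tempered distribution $f$ with $\hat f(\mb z)=\int e^{i\mb z\cdot\mb x}f(\mb x)d\mb x$, multiplication by $x_k$ corresponds to $-i\partial_{z_k}$ and $\partial_{x_k}$ corresponds to $-iz_k$. Hence the Fourier transform of $\sum_k x_k\partial_{x_k}f$ is $\sum_k(-i\partial_{z_k})(-iz_k\hat f)=-\sum_k\partial_{z_k}(z_k\hat f)=-d\,\hat f-\mb z\cdot\nabla\hat f$. The convolution $\int f(\mb x-\mb y)\nu(d\mb y)$ has transform $\hat f\hat\nu$, which is well defined because $f$ is a probability density and $\nu$ is a probability measure. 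Substituting the Fourier identity gives
\[
\widehat{\textstyle\sum_k x_k\partial_k f}=-d\hat f-\theta(\hat\nu-1)\hat f=(\theta-d)\hat f-\theta\hat\nu\hat f .
\]
Injectivity of the Fourier transform on $\Shw'$ then yields the claimed equation in the weak sense. Concretely, for $\phi\in\Shw$ the weak equation reads $-\int f\,(d\phi+\mb x\cdot\nabla\phi)=\int\big((\theta-d)f-\theta f*\nu\big)\phi$.

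The main obstacle is justifying the Fourier-side identity as an equality of tempered distributions, not just pointwise on $\mb z\neq\mb 0$. The concern is that $\mb z\cdot\nabla\psi$ must be understood distributionally, while $\psi$ is merely continuous and bounded. I will handle this by proving the identity in weak form directly: for $\phi\in\Shw$, write $\int\psi(\mb z)\,(d\phi+\mb z\cdot\nabla\phi)(\mb z)\,d\mb z=-\frac{d}{dr}\big|_{r=1}\int\psi(r\mb z)\phi(\mb z)\,d\mb z$, which follows from the change of variables $\int\psi(\mb z)\phi(\mb z/r)r^{-d}d\mb z$. I then compute $\frac{d}{dr}\psi(r\mb z)=\frac{\theta}{r}(\hat\nu(r\mb z)-1)\psi(r\mb z)$ pointwise. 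Differentiating under the integral is justified by dominated convergence, since $|\hat\nu-1|\le2$, $|\psi|\le1$, and $\phi$ is integrable. This bypasses any smoothness of $\psi$ at the origin and any moment assumption on $\nu$.
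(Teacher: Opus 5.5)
Your proposal is correct and follows essentially the same route as the paper: use the explicit characteristic function to get $\mb z\cdot\nabla\hat f=\theta(\hat\nu-1)\hat f$, then translate back to $f$ via the Fourier rules on $\Shw'$ and $\widehat{f*\nu}=\hat f\hat\nu$. Your dilation argument, which proves the Fourier-side identity directly in weak form with no moment assumption on $\nu$, supplies a justification that the paper leaves implicit when it simply writes ``Consequently''.
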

\begin{proof}
Notice that, due to Proposition \ref{prop: MD_class_cf}, the Fourier transform $\hat f $ of $f$ satisfies 
\begin{equation*}
    \log \hat{f}(\mb z) =\theta \int\limits_{\R^d} \int\limits_0^1(e^{is\mb z \cdot \mb x}-1)\nu(d\mb x) \frac {ds} s ,\quad \mb z\in \Rd.
\end{equation*}
Consequently, 
\begin{equation*}
    \sum_{k=1}^d z_k\frac { \partial \hat{f}(\mb z)} {\partial z_k} =\theta\hat{f}(\mb z)\hat{\nu}(\mb z)-\theta\hat{f}(\mb z).
\end{equation*}
The result now follows from the properties of the Fourier transform on $\Shw'$ and the convolution property of probability measures; see \cite[Thm. 4.25]{mitrea2013distributions} and \cite[Prop. 2.5 (iii)]{sato1999levy}.
\end{proof}
 \begin{remark}
    The differential operator $L=\sum_{k=1}^d z_k\frac{\partial}  {\partial z_k}$ is sometimes referred to as the Euler partial differential operator.
\end{remark}

\section{Some applications of the operator Dickman distributions}\label{sec:appl}

The one-dimensional Dickman distribution has applications in various probabilistic problems and beyond, as discussed in Section \ref{sec:Univ_D}. Moreover, the multidimensional Dickman distribution from Definition \ref{def:MD} was proven to be useful in approximating small jumps of some multidimensional L\'evy processes. This section presents examples of how the above-mentioned applications can be extended to the case of operator Dickman distributions.

 Hereafter ``$\overset{v}{\to}$'' and ``$\overset{w}{\to}$'' denote respectively vague and weak convergence of measures. In particular,  $\mu_n \overset{v}{\to} \mu$, $n\to\infty$, for measures $\mu_n$ and $\mu$ over $\Rd$, whenever
\begin{equation*}
 \int\limits_{\Rd} f(\mb x)\mu_n (d\mb x)\to \int\limits_{\Rd} f(\mb x)\mu (d\mb x),\quad n\to\infty,
\end{equation*}
for all bounded, continuous maps $f:\Rd \mapsto \R$ vanishing on a neighbourhood of $\mb 0$ \cite[p. 41]{sato1999levy}.
Moreover, by convention, ``$\overset{w}{\to}$'', when applied to random elements, means the weak convergence of their corresponding probability measures.

First, we present the following multidimensional counterpart for \cite[Ex. 4.7.5]{vervaat1972success}.
\begin{theorem}
Consider a sequence $\varepsilon_1, \varepsilon_2,\ldots $ of independent random variables such that  $\mbb P(\varepsilon_k=1)=1-\mbb P(\varepsilon_k=0)=p_k$, $k\geq 1$, and assume that
\begin{gather*}
    p_k\in [0,1), \quad k\geq 1,\\
    \lim\limits_{k\to\infty} p_k=0\quad \text{and}\quad  \sum_{k=1}^\infty p_k=\infty.
\end{gather*}
    Let $L(n)$ denote the index of $n$-th $1$ in the sequence $\varepsilon_k$, $k\geq 1$.
    
    Fix now $Q\in \mc M_{+}$ and assume that $Q$ is as in the Example \ref{example: diagonalizable}. Then 
    \begin{equation*}
        (L(n))^Q\sum_{k=1}^\infty (L(n+k))^{-Q}\mb W_k\overset{w}{\to} \mb X, \quad n\to \infty,
    \end{equation*}
    where $\mb W_1, \mb W_2, \dots$ is the sequence of independent identically distributed random elements with a common law $\nu\in H$, independent also of a sequence $\varepsilon_1, \varepsilon_2,\ldots $, and $\mc L(\mb X)=\mc D(Q, \nu)$.
\end{theorem}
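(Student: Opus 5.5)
The plan is to reduce to the diagonal case, write the series as a finite head plus a tail, get convergence of the head from the one-dimensional theory, and show the tail is uniformly small.

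By Example \ref{example: diagonalizable} we may write $Q=S\Lambda S^{-1}$ with $\Lambda=diag(a_1,\ldots,a_d)$, $a_i>0$. Then $t^Q=St^\Lambda S^{-1}$, and the left-hand side becomes $S\sum_{k\ge1}(L(n)/L(n+k))^{\Lambda}S^{-1}\mb W_k$. Also $\mc L(S^{-1}\mb X)=\mc D(\Lambda,S^{-1}\nu)$, and $S^{-1}\nu\in H$. Since $\mb x\mapsto S\mb x$ is continuous, it suffices to treat $Q=\Lambda$ diagonal with $\nu\in H$.

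Write $R_{n,k}=L(n)/L(n+k)\in(0,1]$. We then split the series into the finite sum $\Sigma_{n,m}=\sum_{k=1}^m R_{n,k}^{\Lambda}\mb W_k$ and the tail $T_{n,m}=\sum_{k>m}R_{n,k}^{\Lambda}\mb W_k$. We will use the standard approximation theorem (Billingsley, Thm.~3.2), which needs three facts:
\begin{enumerate}[(a)]
\item $\Sigma_{n,m}\overset{w}{\to}\Sigma_m:=\sum_{k=1}^m(U_1\cdots U_k)^{\Lambda}\mb W_k$ as $n\to\infty$, for each fixed $m$;
\item $\Sigma_m\to\mb X$ a.s. as $m\to\infty$, which holds by \eqref{eq: opMDas_inf_sum}, whose almost sure convergence follows from \eqref{eq: norm_exp_Q} and $\nu\in H$ as in Vervaat's argument;
\item $\lim_{m}\limsup_n\mbb P(|T_{n,m}|>\varepsilon)=0$ for every $\varepsilon>0$.
\end{enumerate}

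For (a), the key input is the one-dimensional fact underlying \cite[Ex.~4.7.5]{vervaat1972success}: under the hypotheses on $p_k$, the vector of successive ratios $(L(n)/L(n+1),\ldots,L(n+m-1)/L(n+m))$ converges in distribution to $(U_1,\ldots,U_m)$ with i.i.d.\ uniform components. This is obtained from the conditional law of the gap to the next success given $L(n+j-1)$, together with the Markov property of the sequence $L(\cdot)$. We take this as given from Vervaat. Since $R_{n,k}$ is a product of these ratios and $(s_1,\ldots,s_m,\mb w_1,\ldots,\mb w_m)\mapsto\sum_k(s_1\cdots s_k)^{\Lambda}\mb w_k$ is continuous (see the Remark after Definition \ref{def: opMD}), the continuous mapping theorem gives (a). Here we also use that the $\mb W_k$ are independent of the $\varepsilon_k$, so the joint law converges to a product law.

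For (c), we use diagonality. Coordinatewise, $|T_{n,m}|\le\sum_{i=1}^d\sum_{k>m}R_{n,k}^{a_i}|\mb W_k|$. Each inner sum is the tail of the one-dimensional scalar series with nonnegative amplitudes $|\mb W_k|$ and exponent $a_i$, and $\mc L(|\mb W_1|)$ has a finite $\log^+$ moment. For each $i$, Vervaat's one-dimensional theorem (the case $d=1$ of the present statement with nonnegative amplitudes) gives convergence in law of the full scalar series. Combined with (a) applied to the scalar partial sums and with the a.s.\ convergence of the scalar limit series, this forces the scalar tails to satisfy (c). Here we use that the summands are nonnegative, so the tail equals the full sum minus the partial sum and both converge in law jointly, being functions of the same variables.

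The main obstacle is this tail control. Everything hinges on the one-dimensional uniform-ratio limit and on its tightness statement, which we import rather than reprove. If a direct estimate is needed, we would bound $\mbb E[\log^+]$-type quantities, for instance $\mbb P(R_{n,k}>\rho^k)$ for some fixed $\rho<1$ uniformly in large $n$, using the exponential-gap structure of $L(\cdot)$. We would then finish with a Borel--Cantelli-type summation against $\nu\in H$.
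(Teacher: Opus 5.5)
Your reduction to a diagonal $\Lambda$ matches the paper's. The rest of your architecture (head from joint convergence of $(R_{n,1},\ldots,R_{n,m})$ with the independent $\mb W_k$, closed with Billingsley's approximation theorem) is more careful than the paper's proof. That proof only quotes Vervaat's scalar limit of $\sum_k(L(n)/L(n+k))^{a_j}$ for each $j$ and then appeals to continuous mapping. It never deals with the random amplitudes $\mb W_k$ or the tail of the series. There are nevertheless two genuine problems.

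First, the fact you import in (a) is false under the stated hypotheses on $p_k$ alone. For $p_k=(k+1)^{-1/2}$ one has $L(n)\sim n^2/4$ a.s., so $L(n)/L(n+k)\to 1$ for each fixed $k$. Already $\sum_{k\le n}L(n)/L(n+k)\ge nL(n)/L(2n)\to\infty$, so the statement itself fails (look at the first coordinate for $Q=\operatorname{diag}(1,2)$, $\nu=\delta_{(1,0)}$). For $p_k=\alpha/k$, as in the paper's subsequent example, the ratios converge to $U^{1/\alpha}$ and the limit would be $\mc D(Q/\alpha,\nu)$. What (a) actually requires is $\sum_{m<j\le xm}p_j\to\log x$ for every $x>1$ (e.g.\ $kp_k\to 1$). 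Then $\mathbb P(L(n+1)>xm\mid L(n)=m)=\prod_{m<j\le xm}(1-p_j)\to x^{-1}$, and the Markov property of $L(\cdot)$ gives the i.i.d.\ uniform limit. The paper's proof leans on the same citation, so this is a defect of the statement rather than of your route. Still, your argument cannot be completed without adding such a hypothesis.

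Second, and specific to your proposal, step (c) is circular for general $\nu\in H$. The ``one-dimensional theorem with nonnegative amplitudes $|\mb W_k|$'' you invoke is the case $d=1$ of the statement being proved, whereas what the paper quotes from Vervaat has amplitude one. From the amplitude-one result your argument only covers $\nu$ of bounded support, where $|T_{n,m}|\le C\sum_i\sum_{k>m}R_{n,k}^{a_i}$. For unbounded $\nu$ it cannot suffice. Tightness of $\sum_k R_{n,k}^{a}$ is compatible with decay like $R_{n,k}^{a}\approx k^{-2}$, while $\sum_k k^{-2}|\mb W_k|=\infty$ a.s.\ if $\mathbb P(|\mb W|>x)=(\log x)^{-2}$ for $x\ge e$, which is a law in $H$.

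The missing ingredient is exactly your fallback: a geometric bound $\lim_m\limsup_n\mathbb P(\exists k>m:\ R_{n,k}>\rho^k)=0$ for some $\rho<1$, which has to be proved. Under the condition above it does hold:
\begin{itemize}
\item For $m\ge j_0$ one has $\mathbb P(L(n+1)>2m\mid L(n)=m)\ge 1/4$.
\item Hence, along the chain, $\log(1/R_{n,k})$ stochastically dominates $\log 2$ times a Binomial$(k,1/4)$ count.
\item The strong law then gives the bound with $\rho=2^{-1/8}$, uniformly in $n\ge j_0$ (recall $L(n)\ge n$).
\item Off a small event, $|T_{n,m}|\le\sum_i\sum_{k>m}\rho^{a_ik}|\mb W_k|$. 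This is an $n$-free tail that tends to $0$ a.s.\ by Borel--Cantelli and $\mathbb E\log^+|\mb W|<\infty$.
\end{itemize}
Finally, the reason you give in (c), that full and partial sums ``converge in law jointly, being functions of the same variables'', is not valid, since the full sum is not a continuous functional of the sequence. What does work is the ordering: for a scalar full sum $Z_n\overset{w}{\to} Z$ with partial sums $S_{n,m}\overset{w}{\to} S_m$ (as $n\to\infty$), use $0\le S_{n,m}\le Z_n$, e.g.\ via $\mathbb E[\arctan Z_n-\arctan S_{n,m}]\to\mathbb E[\arctan Z-\arctan S_m]$.
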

\begin{proof}
    By the assumption $Q=S\Lambda S^{-1}$ for some invertible matrix $S$ and a diagonal matrix $\Lambda=diag\{a_1,\ldots,a_d\}$, $a_1,\ldots,a_d>0$. Then we have $t^Q=S \cdot diag\{t^{a_1,},\ldots,t^{a_d}\}\cdot S^{-1}$ for any $t>0$. Since $a_j>0$, $j=1,\ldots, d$, it follows from \cite[Ex. 4.7.5]{vervaat1972success} that 
    \begin{equation*}
        (L(n))^{a_j}\sum_{k=1}^\infty (L(n+k))^{-a_j}\overset{w}{\to} \sum_{k=1}^\infty (U_1\cdot \ldots\cdot U_k)^{a_j} \text{ as } n\to \infty,
    \end{equation*}
    for any $j=1,\ldots, d$.
    Rewrite now
    \begin{equation*}
        (L(n))^Q\sum_{k=1}^\infty (L(n+k))^{-Q}\mb W_k= S (L(n))^{\Lambda} \sum_{k=1}^\infty (L(n+k))^{-\Lambda} S^{-1}\mb W_k.
    \end{equation*}
    The result follows then from the continuous mapping theorem \cite[Thm. 2.7]{billingsleyconvergence},  \cite[Thm. 2.8]{billingsleyconvergence} and \eqref{eq: opMDas_inf_sum}.
\end{proof}

\begin{example}
    Notice that the sequence $L(n), n\geq 1$, is an integer-valued Markov chain with transition probability function given by 
    \begin{equation*}
        P(m,k)=\mathbb P(L(n)=k|L(n-1)=m)=p_k\prod_{j=m+1}^{k-1} (1-p_j),\quad k\geq m+1\geq 2,
    \end{equation*}
    where $n\geq 1$ and $L(0)\coloneq0$.
    Let us consider, for example, $p_k=\alpha/k$, $k\geq 1$, for some $\alpha\in (0,1)$. It then holds that
    \begin{align*}
        \mathbb P(L(n)-L(n-1)&=r |L(n-1)=m)\\
        &=\left(1-\frac \alpha {m+1}\right)\cdot \ldots \cdot \left(1-\frac \alpha {m+r-1}\right)\frac \alpha {m+r},\quad r\geq 1
    \end{align*}    
    and we can recognise that, conditionally on $L(n-1)=m$, the random variable $L(n)-L(n-1)$, $n\geq 1$, has generalised Sibuya distribution $S_1(\alpha, m)$ as defined in \cite{kozubowski2018generalized}.
    
    The sequence $L(n)$ has been studied extensively in relation to record epochs in \cite{vervaat1972success}.
\end{example}

Recall now that it has been proven in \cite{covo2009approximations}
that the Dickman distribution can be used to approximate small jumps of L\'evy processes when the Brownian approximation is not applicable. It therefore makes simulation of the L\'evy process (or the corresponding infinitely divisible random variable) easier. The analogous approximation for some class of multidimensional  L\'evy processes has been established in \cite{grabchak2024smalljumpslevyprocesses} using the multidimensional Dickman distribution defined as in \eqref{eq: MD}. We show here that these results can be extended to a wider class of infinitely divisible distributions using the operator Dickman distributions $\mc D(Q, \sigma)$ with $Q\in\mc M_+$ and $\sigma\in \mc S$. Note that our aim here is not to establish the most general case, but rather to demonstrate the possible scope of applications of the newly defined distributions.

We denote $\bar{\R}^d$, a one-point compactification of $\Rd$ and $\bar{\R}^d_0=\bar{\R}^d\setminus\{0\}$, $\bar{\R}_+=(0,\infty]$. We also introduce for any $Q\in \mc M_+$ the set $S_Q$ given by
\begin{equation*}
    S_Q=\{\mb x\in \Sd: |r^Q\mb x|>1 \text{ for every } r>1\},
\end{equation*}
and a homeomorphic map
\begin{equation*}
    \xi_Q: \bar{\R}_+\times S_Q \mapsto \bar{\R}^d_0, \quad \xi_Q(r,\mb x)=r^Q\mb x,
\end{equation*}
see the proof of \cite[Prop. 2]{jurek1982structure}.

To present the main result, we need the following generalisation of \cite[Lemma 4.9]{grabchak2015tempered}.

\begin{lemma}\label{lem: vague_conv}
    Let $\mu, \mu_1,\mu_2, \ldots $ be Borel measures on $\bar{\R}^d_0$. Assume also that $\mu$ is of the form 
    \begin{equation}\label{eq: lemma_jumps}
        \mu(B)=\int_{\Sd}\int_{\bar{\R}_+}\mathds{1}_B(r^Q\mb x)\sigma(d\mb x) \lambda(dr),\quad  B\in \mc B(\bar{\R}^d_0),
    \end{equation}
    where $\sigma$ is a probability measure on $\Sd$ and $\lambda$ is a Borel measure on $\bar{\R}_+$ such that $\lambda(\{a\})=0$ for any $0<a<\infty$ and $\lambda$ is finite on any set bounded away from $0$.   
    Then 
    \begin{equation*}
        \mu_n\overset{v}{\to}\mu \quad \text{if and only if}\quad \mu_n(A_{t,D})\to\mu(A_{t,D}),\quad n\to \infty,
    \end{equation*}
    for all $A_{t,D}=\xi_Q((t,\infty]\times D)=\{r^Q\mb x: r>t, \mb x\in D\}$ with $t>0$ and $ D\in \mc B(S_Q)$ such that $\sigma(\partial D)=0$.
\end{lemma}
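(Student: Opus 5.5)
The plan is to mimic the proof of \cite[Lemma 4.9]{grabchak2015tempered}, with spherical polar coordinates replaced by the ``$Q$-polar'' coordinates given by the homeomorphism $\xi_Q$.

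\emph{Necessity.} If $\mu_n\overset{v}{\to}\mu$, it suffices to show that each $A_{t,D}$ is bounded away from $\mb 0$ and is a $\mu$-continuity set. By \eqref{eq: norm_exp_Q}, $|r^Q\mb x|\geq c_1 t^{K_1}$ for $r>t$, $r\le 1$, $\mb x\in \Sd$, and $|r^Q\mb x|>1$ for $r>1$, $\mb x\in S_Q$. Hence $A_{t,D}$ is bounded away from $\mb 0$. Since $\xi_Q$ is a homeomorphism,
\begin{equation*}
\partial A_{t,D}\subseteq \xi_Q\bigl(\{t\}\times \bar D\bigr)\cup \xi_Q\bigl([t,\infty]\times \partial D\bigr).
\end{equation*}
Before using \eqref{eq: lemma_jumps}, I would check that $\mu$ only charges points $r^Q\mb x$ with $\mb x\in S_Q$, or else transfer $\sigma$ to $S_Q$ via $\xi_Q^{-1}$; this is the technical point I expect to need care. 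Once that is settled, $\mu$ is the image of $\lambda\otimes\sigma$ under $\xi_Q$. The first piece of the boundary then has measure $\lambda(\{t\})\sigma(\bar D)=0$, and the second has measure at most $\lambda([t,\infty])\sigma(\partial D)=0$. The portmanteau theorem for vague convergence \cite[p.~41]{sato1999levy} then gives $\mu_n(A_{t,D})\to\mu(A_{t,D})$.

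\emph{Sufficiency.} This is the main step.

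\begin{enumerate}[(a)]
\item Fix $t_0>0$ and $A=A_{t_0,S_Q}$. Then $\mu_n(A)\to\mu(A)<\infty$, so for large $n$ the restrictions $\mu_n|_A$ are finite measures with bounded total mass.
\item Consider the class of sets $\{A_{t,D}\cap A\}$, together with the differences $A_{s,D}\setminus A_{t,D}=\xi_Q((s,t]\times D)$, where $t,s$ avoid the at most countably many atoms of $\lambda$ at $\infty$ (these are trivial) and $D$ ranges over $\sigma$-continuity sets. This class is closed under finite intersections, and its convergence follows from the hypothesis by subtraction.
\item Via $\xi_Q^{-1}$ these sets correspond to product rectangles $(s,t]\times D$ in $\bar{\R}_+\times S_Q$. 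Every open subset of $\xi_Q^{-1}(A)$ is a countable union of such rectangles, because the admissible $t$ and $D$ are dense: all but countably many values are allowed, and $\sigma$-continuity balls exist at all but countably many radii.
\item By \cite[Thm.~2.2]{billingsleyconvergence}, the normalised measures $\mu_n|_A/\mu_n(A)$ converge weakly to $\mu|_A/\mu(A)$. Here $\bar{\R}^d_0$ is metrisable, and $A$ is closed up to a $\mu$-null boundary.
\end{enumerate}

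Given a bounded continuous $f$ vanishing near $\mb 0$, choose an admissible $t_0$ so small that $f$ vanishes outside $A_{t_0,S_Q}$; such a $t_0$ exists by the lower bound in \eqref{eq: norm_exp_Q}. Then $\int f\,d\mu_n\to\int f\,d\mu$, which is vague convergence. The main obstacle I anticipate is the topological bookkeeping: showing that these $\xi_Q$-rectangles generate the topology, and handling the behaviour near $r=\infty$ in the compactification.
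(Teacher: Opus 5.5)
This is the paper's route: its proof just says that Grabchak's Lemma 4.9 carries over word for word via the continuity and bijectivity of $\xi_Q$, and your portmanteau/Billingsley sketch is a sound expansion of that. The paper also leaves implicit the point you noticed, namely that $\sigma$ should be regarded as living on $S_Q$. One small correction: choosing $t_0$ so that $f$ vanishes off $A_{t_0,S_Q}$ uses the upper bound $|s^Q\mb x|\le c_2 s^{K_2}|\mb x|$ from \eqref{eq: norm_exp_Q}, not the lower one.
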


\begin{proof}
    The proof repeats the proof of \cite[Lem. 4.9]{grabchak2015tempered} almost word by word due to continuity and bijectivity of the map $\xi_Q$.
\end{proof}

Recall now that for $ Q\in \mc M_+$ and $\sigma\in\mc S$, the L\'evy measure of the distribution $\mc D(Q,\sigma)$ is given by 
\begin{equation}\label{eq: Lmeas_D_S}
    M(B)=\int_{\Sd} \int_0^1 \mathds{1}_{B}(s^Q\mb x) \sigma(d\mb x) \frac {ds} s,\quad B\in \mc B(\R^d),
\end{equation}
which is of the form \eqref{eq: lemma_jumps}, see Corollary \ref{cor: ID_opMD}. 

We will further limit our attention to $ID(\Rd)$ distributions $\varrho$ with characteristic triple $(\mb a, O, \mu)$ such that the L\'evy measure $\mu$ satisfies the additional condition 
\begin{equation}\label{eq:lam-cond}
\int_{|\mb x|<1}|\mb x |\mu(d\mb x)<\infty.
\end{equation}
Moreover, choosing for simplicity $\mb a =\int_{|\mb x|<1}\mb x \mu (d\mb x)$, we obtain
 \begin{equation}\label{eq: mod_ID_CF}
    \hat{\varrho}(\mb z)=\exp\left[\int_{\Rd}(e^{i\mb z \cdot \mb x}-1)\mu(d\mb x) \right].
    \end{equation} 
    
\begin{theorem}\label{thm: convergence}
    Fix $Q\in \mc M_+$ and $\sigma\in \mc S$. Let $\varrho_1, \varrho_2, \ldots\in ID(\Rd)$ be a sequence of infinitely divisible distributions and let  $\mu_1, \mu_2, \ldots$ denote the sequence of their L\'evy measures, which we assume satisfy the  condition \eqref{eq:lam-cond}.
    Assume additionally that 
    \begin{equation}\label{eq: lem_converg_tightness_cond}
        \lim_{\delta \to 0} \limsup_{n \to \infty} \int_{|\mb x|<\delta}|\mb x|\mu_n(d\mb x)=0.
    \end{equation}
    Then $\varrho_n\overset{w}{\to}\mc D(Q,\sigma)$ as $n\to \infty$ whenever it holds for every $t>0$ and every $D\in \mc B(S_Q)$, $\sigma(\partial D)=0$, that
    \begin{equation}\label{eq: lem_converg_cond}
        \mu_n(A_{t,D})\to -\log (1\wedge t)\sigma (D),\quad n\to\infty,
    \end{equation}
    where $A_{t,D}$ is given as in Lemma \ref{lem: vague_conv}.
\end{theorem}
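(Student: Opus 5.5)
The plan is to reduce weak convergence of $\varrho_n$ to pointwise convergence of characteristic functions in the form \eqref{eq: mod_ID_CF}. By Corollary~\ref{cor: ID_opMD} and \eqref{eq: Lmeas_D_S}, the target $\mc D(Q,\sigma)$ has L\'evy measure $M$ of the form \eqref{eq: lemma_jumps}, with $\lambda(dr)=r^{-1}\mathds 1_{(0,1)}(r)\,dr$. This $\lambda$ has no atoms and is finite away from $0$.

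\textbf{Step 1: identify the limit measure on the sets $A_{t,D}$.} I would first compute $M(A_{t,D})$. Since $\xi_Q$ is a bijection, $s^Q\mb x\in A_{t,D}$ with $\mb x\in S_Q$ means exactly $s>t$ and $\mb x\in D$. Thus $M(A_{t,D})=\sigma(D)\int_t^1 ds/s=-\log(1\wedge t)\,\sigma(D)$, matching \eqref{eq: lem_converg_cond}.

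There is one subtlety here: \eqref{eq: Lmeas_D_S} integrates over all of $\Sd$, not only over $S_Q$. Every point of $\Sd$ can be written as $r_0^Q\mb y$ with $\mb y\in S_Q$, so I would push $\sigma$ forward to $S_Q$ and absorb the factor $r_0$ into a reparametrisation of the radial integral. Alternatively, I would simply note that the statement is phrased for $D\subset S_Q$ and treat $\sigma$ as living there after this identification. Lemma~\ref{lem: vague_conv} then gives $\mu_n\overset{v}{\to}M$ on $\bar{\R}^d_0$.

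\textbf{Step 2: show $\int(e^{i\mb z\cdot\mb x}-1)\mu_n(d\mb x)\to\int(e^{i\mb z\cdot\mb x}-1)M(d\mb x)$ for each $\mb z$.} I would fix $\delta>0$ with $M(\{|\mb x|=\delta\})=0$ and split $\Rd$ into three regions.

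For $|\mb x|\ge\delta$, the integrand is bounded and continuous. Vague convergence on the compactification $\bar{\R}^d_0$ covers neighbourhoods of $\infty$, which gives tightness at infinity. Moreover, $M$ is supported in $\{|\mb x|\le \max_{\mb x\in\Sd}|\mb x|\}$, so the mass of $\mu_n$ near infinity tends to $0$. Together these give convergence of this part.

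For $|\mb x|<\delta$, I would bound $|e^{i\mb z\cdot\mb x}-1|\le|\mb z||\mb x|$. Condition \eqref{eq: lem_converg_tightness_cond} makes $\limsup_n$ of this piece small as $\delta\to0$. The same piece for $M$ is small by Lemma~\ref{lem1}\ref{lem1:i}, since $\int_{|\mb x|<\delta}|\mb x|M(d\mb x)\to0$.

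Letting $n\to\infty$ and then $\delta\to0$ yields convergence of the exponents. Hence $\hat\varrho_n\to\hat{\mc D}$ by Proposition~\ref{prop: MD_class_cf}, and L\'evy's continuity theorem gives $\varrho_n\overset{w}{\to}\mc D(Q,\sigma)$.

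\textbf{Main obstacle.} I expect the delicate point to be making Step 1 rigorous: the sets $A_{t,D}$ are defined through $S_Q$ while $\sigma$ lives on $\Sd$. I also need the vague convergence from Lemma~\ref{lem: vague_conv} to control the region near $\infty$ in $\bar{\R}^d_0$, and not only compact subsets of $\Rd\setminus\{\mb 0\}$. I would handle the latter by taking $t$ large in \eqref{eq: lem_converg_cond}, which shows $\mu_n(A_{t,S_Q})\to0$. The tail sets $\{|\mb x|>R\}$ are contained in some $A_{t,S_Q}$ by \eqref{eq: norm_exp_Q}, so their $\mu_n$-mass also tends to $0$.
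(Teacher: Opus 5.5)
Your proof is correct (up to the point about $S_Q$ below, which the paper shares), but it takes a somewhat different route from the paper's.

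\textbf{Comparison with the paper.} The paper invokes Sato's convergence criterion for infinitely divisible laws (Thm.~8.7 in Sato's book) and checks its three conditions:
\begin{itemize}
\item vague convergence $\mu_n\overset{v}{\to}M$, via Lemma~\ref{lem: vague_conv};
\item the condition on $\int_{|\mathbf x|\le\delta}(\mathbf z\cdot\mathbf x)^2\mu_n(d\mathbf x)$, via \eqref{eq: lem_converg_tightness_cond};
\item convergence of $\int\mathbf x c(\mathbf x)\mu_n(d\mathbf x)$, using the same split at $|\mathbf x|=\delta$ that you use.
\end{itemize}
You apply that split directly to the exponent in \eqref{eq: mod_ID_CF} and conclude with L\'evy's continuity theorem. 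This avoids the truncation function and the second-moment condition, and it is self-contained. The paper's route buys a general triplet criterion, which would also cover nonzero Gaussian parts or other drift conventions. Both arguments tacitly require $\varrho_n$ to have exactly the form \eqref{eq: mod_ID_CF}.

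You are also more careful at $\infty$. Lemma~\ref{lem: vague_conv} yields vague convergence on $\bar{\mathbb R}^d_0$, and the paper then tests against bounded continuous functions without comment. Your tail argument via $A_{t,S_Q}$ with $t>1$ is fine, though not needed: $(e^{i\mathbf z\cdot\mathbf x}-1)\mathds 1_{\{|\mathbf x|\ge\delta\}}$ has compact support in $\bar{\mathbb R}^d_0$ and is discontinuous only on the $M$-null set $\{|\mathbf x|=\delta\}\cup\{\infty\}$. One small slip: the bound on the support of $M$ should be $\sup_{0<s\le 1}\|s^Q\|$, not $\max_{\mathbf x\in\Sd}|\mathbf x|=1$. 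This is harmless.

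\textbf{Step 1 and the role of $S_Q$.} Only your second option works. Write $\mathbf x=r_0(\mathbf x)^Q\pi(\mathbf x)$ with $\pi(\mathbf x)\in S_Q$ and $r_0(\mathbf x)\in(0,1]$, where $r_0(\mathbf x)=1$ iff $\mathbf x\in S_Q$. Then $M(A_{t,D})=\int_{\Sd}\mathds 1_D(\pi(\mathbf x))\log^+(r_0(\mathbf x)/t)\,\sigma(d\mathbf x)$. This coincides with $-\log(1\wedge t)\sigma(D)$ for all $t,D$ only when $\sigma(S_Q)=1$. Pushing $\sigma$ forward to $S_Q$ just moves the discrepancy into the radial part.

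This is not a mere technicality. Take $d=2$, $Q\mathbf e_1=\mathbf e_1$, $Q\mathbf e_2=a\mathbf e_1+\mathbf e_2$ with $a>2$, and $\sigma=\delta_{\mathbf x}$ with $\mathbf x=(\mathbf e_1-\mathbf e_2)/\sqrt2$.
\begin{itemize}
\item Since $\mathbf x\cdot(Q+Q^*)\mathbf x=2-a<0$, $|r^Q\mathbf x|$ drops below $1$ for $r$ slightly above $1$, so $\mathbf x\notin S_Q$.
\item Hence $\sigma(D)=0$ for every $D\subset S_Q$.
\item So $\mu_n\equiv 0$, i.e.\ $\varrho_n=\delta_{\mathbf 0}$, satisfies all the hypotheses, although $\mc D(Q,\sigma)\neq\delta_{\mathbf 0}$.
\end{itemize}
You should therefore state $\sigma(S_Q)=1$ as an explicit hypothesis. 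It holds automatically when $Q+Q^*$ is positive definite, in particular for $Q\in\mathbb I$, where $S_Q=\Sd$. The paper's proof uses this assumption tacitly when it declares \eqref{eq: lem_converg_cond} equivalent to $\mu_n\overset{v}{\to}M$.
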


\begin{proof}
    Due to \cite[Thm. 8.7]{sato1999levy}, it is enough to check the following three conditions
    \begin{enumerate}[(i)]
        \item \label{eq:sato:i} $\mu_n\overset{v}{\to}M$;
        \item \label{eq:sato:ii} $\lim\limits_{\delta\to 0} \limsup\limits_{n\to \infty}  \int\limits_{|\mb x|\leq \delta} (\mb z\cdot \mb x)^2\mu_n(d\mb x)=0$  for any $\mb z\in \Rd$;
        \item \label{eq:sato:iii} $\lim\limits_{n\to \infty}\int\limits_{\Rd} \mb x c(\mb x) \mu_n(d\mb x)=\int\limits_{\Rd} \mb xc(\mb x) M(d\mb x)$ for some bounded continuous function  $c: \Rd\mapsto \R$ satisfying conditions of \cite[Thm. 8.7]{sato1999levy}.
    \end{enumerate}
    It follows from Lemma \ref{lem: vague_conv} and \eqref{eq: Lmeas_D_S} that conditions \eqref{eq:sato:i} and \eqref{eq: lem_converg_cond} are equivalent in the case of the $\mc D (Q,\sigma)$ distribution.
    Notice also that the condition \eqref{eq:sato:ii} follows readily from \eqref{eq: lem_converg_tightness_cond}. Consider condition \eqref{eq:sato:iii} now. Assuming that \eqref{eq:sato:i} holds, we can rewrite for any $\delta>0$
    \begin{align*}
        \lim\limits_{n\to \infty}\int\limits_{\Rd} \mb x c(\mb x) \mu_n(d\mb x)=&\lim\limits_{n\to \infty} \left( \int\limits_{|\mb x|<\delta} \mb x c(\mb x) \mu_n(d\mb x)+\int\limits_{|\mb x|\geq \delta} \mb x c(\mb x) \mu_n(d\mb x)\right)\\
        =&\lim\limits_{n\to \infty}  \int\limits_{|\mb x|<\delta} \mb x c(\mb x) \mu_n(d\mb x)+ \int\limits_{|\mb x|\geq \delta} \mb x c(\mb x) M(d\mb x),
    \end{align*}
    where the last equality follows from the of vague convergence $\mu_n\overset{v}{\to}M$ since the maps $\Rd\mapsto \R$, $\mb x\mapsto x_k c(\mb x)$, $k=1,\ldots, d$, define bounded continuous functions; see \cite[Thm. 1]{barczy2006portmanteau}. We can now let $\delta\to 0$ and \eqref{eq:sato:iii} follows from \eqref{eq: lem_converg_tightness_cond}, see also  \cite[Thm. 5.29]{kallenberg2021foundations}.
\end{proof}

We now consider the specific example of a sequence satisfying the conditions of Theorem \ref{thm: convergence}. Namely let $\mu$  be a L\'evy measure on $\Rd$ such that
\begin{equation}\label{eq: ID_mu}
    \mu(B)=\int_{\Sd} \int_0^\infty\mathds{1}_B\{r^Q\mb x\} \varpi(d\mb x, dr), \quad B\in \mc B(\Rd),
\end{equation}
where $Q\in \mc M_+$ is fixed and $\varpi(\cdot, \cdot)$ is a measure on $\Sd\times \R_+$ such that 
\begin{equation}\label{eq: mu_Id_Levycond}
    \int_{\Sd} \int_0^\infty|r^Q\mb x| \mathds{1}_{B_1}(r^Q\mb x) \varpi(d\mb x, dr)<\infty,
\end{equation}
where we denote $B_h\subset \Rd$, an open ball of radius $h>0$.
Thus, measure $\mu$ satisfies \eqref{eq:lam-cond} and there exists $\rho\in ID(\Rd)$ such that \eqref{eq: mod_ID_CF} holds.

For the measure $\mu$ and some $u>0$, we consider the truncation  
    \begin{equation*}
         T_u\mu(B)=\int\limits_{\Sd} \int\limits_0^{u}\mathds{1}_B\{r^Q\mb x\}   \varpi(d\mb x, dr),\quad B\in \mc B(\Rd),
    \end{equation*}
and note that $T_u\mu$ is a L\'evy measure for any $u>0$. Note also that the truncation $T_u$ corresponds to considering only small jumps of the underlying Poisson random measure, whereas the measure $\mu-T_u\mu$ corresponds to the compound Poisson part.
   Denote then $T_u\varrho$, the $ID(\Rd)$ distribution with L\'evy measure  $T_u\mu$. Consider further the transformation $\varrho \mapsto \varepsilon^{-Q} T_\varepsilon\varrho$, $\varepsilon>0$, which accords with truncation and operator rescaling of the original measure and denote $\varrho_\varepsilon= \varepsilon^{-Q} T_\varepsilon\varrho$. Let $\mu_\varepsilon$ be the corresponding L\'evy measure, i.e.
   \begin{equation}\label{eq: mu_eps_def}
         \mu_\varepsilon(B)=\int_{\Sd} \int_0^{\varepsilon}\mathds{1}_B\{(r/\varepsilon)^Q\mb x\}   \varpi(d\mb x, dr),\quad B\in \mc B(\Rd).
    \end{equation}
   We show further that for measures $\mu_\varepsilon$, $\varepsilon\to 0$, the condition \eqref{eq: lem_converg_tightness_cond} follows from \eqref{eq: lem_converg_cond}.  Consequently,
   the vague convergence of the L\'evy measures $\mu_\varepsilon\overset{v}{\to}M$ is sufficient to establish weak convergence $\varrho_\varepsilon \overset{w}{\to}\mc D(Q,\sigma)$, $\varepsilon\to 0$, for a sequence $\varrho_\varepsilon$ obtained by the transformation of measure $\varrho$.
   We note that similar constructions have been considered in more detail in \cite{covo2009approximations} and \cite{grabchak2024smalljumpslevyprocesses}. The application of this result for simulations has also been discussed there.

\begin{lemma}\label{lem:  ID_convergence}
    Let $\mu_\varepsilon$, $\varepsilon>0$, be a collection of L\'evy measures given by \eqref{eq: mu_eps_def}. Assume also that there exists some probability measure $\sigma$ on $\Sd$ such that for every $t>0$ and every $D\in \mc B(S_Q)$, $\sigma(\partial D)=0$, it holds that
    \begin{equation*}
        \mu_\varepsilon(A_{t,D})\to -\log (1\wedge t)\sigma (D),\quad \varepsilon\to0,
    \end{equation*}
    where $A_{t,D}$ is given as in Lemma \ref{lem: vague_conv}. Then $\varrho_\varepsilon \overset{w}{\to}\mc D(Q,\sigma)$.
\end{lemma}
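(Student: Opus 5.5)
The plan is to apply Theorem~\ref{thm: convergence} to the family $\varrho_\varepsilon$, $\varepsilon\to 0$, with $\mu_\varepsilon$ as in \eqref{eq: mu_eps_def}. Each $\mu_\varepsilon$ satisfies \eqref{eq:lam-cond}, since it is the image of $T_\varepsilon\mu$ under the linear map $\varepsilon^{-Q}$ and $T_\varepsilon\mu\le\mu$ satisfies \eqref{eq: mu_Id_Levycond}. Condition \eqref{eq: lem_converg_cond} is exactly the hypothesis. So the only thing to prove is the tightness condition \eqref{eq: lem_converg_tightness_cond}, namely
\begin{equation*}
\lim_{\delta\to0}\limsup_{\varepsilon\to0}\int_{|\mb y|<\delta}|\mb y|\,\mu_\varepsilon(d\mb y)=0,
\end{equation*}
which we want to deduce from \eqref{eq: lem_converg_cond}. (Theorem~\ref{thm: convergence} is stated for sequences; we apply it along an arbitrary sequence $\varepsilon_n\to0$.)

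The first step is a dyadic decomposition adapted to $Q$. By the definition of $S_Q$ and the homeomorphism $\xi_Q$, each $\mb y\neq\mb 0$ can be written uniquely as $\mb y=\tau^Q\mb u$ with $\tau>0$ and $\mb u\in S_Q$. By \eqref{eq: norm_exp_Q}, for $\tau\le1$ we have $c_1\tau^{K_1}\le|\tau^Q\mb u|\le c_2\tau^{K_2}$. Hence the ball $\{|\mb y|<\delta\}$ is contained in $\{\tau<\eta(\delta)\}$ with $\eta(\delta)=(\delta/c_1)^{1/K_1}\to0$. Next split $\{\tau<\eta\}$ into shells $R_j=A_{\eta 2^{-j-1},S_Q}\setminus A_{\eta2^{-j},S_Q}$, $j\ge0$. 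On $R_j$ we have $|\mb y|\le c_2(\eta2^{-j})^{K_2}$. Moreover $\mu_\varepsilon(R_j)=\mu_\varepsilon(A_{\eta2^{-j-1},S_Q})-\mu_\varepsilon(A_{\eta 2^{-j},S_Q})$, and by \eqref{eq: lem_converg_cond} with $D=S_Q$ (whose boundary relative to $S_Q$ is empty) this tends to $\log 2$ as $\varepsilon\to0$.

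Convergence of each term separately does not give uniformity in $j$, so the second step uses the structure of \eqref{eq: mu_eps_def}. Set $m(\varepsilon,a)=\mu_\varepsilon(A_{a,S_Q})$. Since $(r/\varepsilon)^Q\mb x\in A_{a,S_Q}$ exactly when $r>a\varepsilon$ (after absorbing the reparametrisation of $\mb x$ into $S_Q$, which is where the structure of $\xi_Q$ is used), we get $\mu_\varepsilon(A_{a,S_Q})=\mu_{a\varepsilon}(A_{1,S_Q})+\text{const}$, that is, a scaling relation
\begin{equation*}
\mu_\varepsilon(R_j)=\mu_{\varepsilon\eta2^{-j-1}}(R_{\ast}),\qquad R_\ast=A_{1/2,S_Q}\setminus A_{1,S_Q}.
\end{equation*}
Since $\mu_{\varepsilon'}(R_\ast)\to\log2$ as $\varepsilon'\to0$, there is $\varepsilon_0$ with $\mu_{\varepsilon'}(R_\ast)\le 2\log 2$ for all $\varepsilon'<\varepsilon_0$. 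For $\varepsilon<\varepsilon_0$ and $\eta\le1$ this bound then holds for all $j$ simultaneously. Consequently
\begin{equation*}
\int_{|\mb y|<\delta}|\mb y|\,\mu_\varepsilon(d\mb y)\le\sum_{j\ge0}c_2(\eta2^{-j})^{K_2}\,2\log2\le C\,\eta(\delta)^{K_2},
\end{equation*}
uniformly in $\varepsilon<\varepsilon_0$, and the right-hand side tends to $0$ as $\delta\to0$. This gives \eqref{eq: lem_converg_tightness_cond}, and Theorem~\ref{thm: convergence} yields $\varrho_\varepsilon\overset{w}{\to}\mc D(Q,\sigma)$.

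The main obstacle is the scaling identity in the second step. The measure $\varpi$ is parametrised by $\Sd\times\R_+$ through $r^Q\mb x$, while the sets $A_{t,D}$ are parametrised by $S_Q$. So one must check that $\{(r,\mb x):(r/\varepsilon)^Q\mb x\in A_{a,S_Q}\}$ equals $\{(r,\mb x): r>a\varepsilon\,\tau(\mb x)\}$ for a fixed function $\tau$ determined by $\mb x=\tau(\mb x)^Q\mb u$. This uses only the group property $s^Qt^Q=(st)^Q$ and the uniqueness in $\xi_Q$. If this identity fails to hold exactly, the fallback is a cruder bound: use monotonicity of $a\mapsto m(\varepsilon,a)$, together with the convergence at the finitely many levels $a\in\{2^{-j}\}$, $j\le J$, and treat the tail $j>J$ via the fixed finite integral \eqref{eq: mu_Id_Levycond} rescaled by $\|\varepsilon^{-Q}\|$.
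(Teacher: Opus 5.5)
Your plan is the paper's plan: reduce to \eqref{eq: lem_converg_tightness_cond} via Theorem~\ref{thm: convergence}, split dyadically, and get uniformity in the shell index from a scaling relation between $\mu_\varepsilon$ and $\mu_{\varepsilon'}$ together with convergence on one fixed set. However, the scaling identity you need, which is the step you flagged yourself, is false for general $Q\in\mc M_+$.

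Write $\mb x\in\Sd$ as $\mb x=\tau(\mb x)^Q\mb u(\mb x)$ with $\mb u(\mb x)\in S_Q$. Since $|\mb x|=1$ and $|r^Q\mb u|>1$ for $r>1$, you have $\tau(\mb x)\le 1$, with equality iff $\mb x\in S_Q$. Then \eqref{eq: mu_eps_def} gives
\begin{equation*}
\mu_\varepsilon\bigl(A_{a,S_Q}\setminus A_{b,S_Q}\bigr)=\varpi\bigl(\{(\mb x,r):\ a\varepsilon/\tau(\mb x)<r\le b\varepsilon/\tau(\mb x),\ r<\varepsilon\}\bigr).
\end{equation*}
The thresholds do scale with $\varepsilon$. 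The truncation $r<\varepsilon$, however, lives in the radial variable of $\varpi$, not of $\xi_Q$, and it does not scale with the shell.
\begin{itemize}
\item On the deep shells $R_j$ (once $\eta(\delta)<\inf_{\Sd}\tau$) the truncation is inactive.
\item On $R_*=A_{1/2,S_Q}\setminus A_{1,S_Q}$ it is active at every $\mb x\notin S_Q$, and it removes all directions with $\tau(\mb x)\le 1/2$.
\end{itemize}
Hence $\mu_\varepsilon(R_j)=\mu_{\varepsilon'}(R_*)$ holds for no choice of $\varepsilon'$. The best comparison, with $\varepsilon'=\varepsilon\eta 2^{-j}$, is $\mu_{\varepsilon'}(R_*)\le\mu_\varepsilon(R_j)$, which goes the wrong way for your upper bound.

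Your intermediate relation $\mu_\varepsilon(A_{a,S_Q})=\mu_{a\varepsilon}(A_{1,S_Q})+\mathrm{const}$ already fails for $Q=I$. There $\mu_{a\varepsilon}(A_{1,\Sd})=0$ for every $\varepsilon$, while $\mu_\varepsilon(A_{a,\Sd})=\varpi(\Sd\times(a\varepsilon,\varepsilon))$ genuinely depends on $\varepsilon$.

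The identity is exact only when $S_Q=\Sd$, e.g.\ $Q=\frac1\theta I$. But $S_Q\neq\Sd$ does occur in $\mc M_+$: for $Q=\bigl(\begin{smallmatrix}1&100\\0&1\end{smallmatrix}\bigr)$ and $\mb x=(1,-1)/\sqrt2$ one has $|s^Q\mb x|<1$ at $s=1.01$, so $\mb x\notin S_Q$.

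Your fallback does not close the gap either. Bounding through $\|\varepsilon^{-Q}\|$ discards the direction-dependent scaling. Take $Q=\mathrm{diag}(1,2)$ and $\varpi=\delta_{\mb e_1}\otimes r^{-1}dr$ on $(0,1)$. Here $\mu_\varepsilon=M$, so the hypothesis holds trivially, yet the fallback gives a tail bound of order $\varepsilon^{-2}\cdot\varepsilon=\varepsilon^{-1}$.

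The missing idea is to rescale in coordinates in which the truncation rescales as well. The paper does this in the radial variable of $\varpi$ itself. Writing $\mu_\varepsilon(B)=\int_{\Sd}\int_0^1\mathds{1}_B(r^Q\mb x)\,\varpi(d\mb x,\varepsilon\,dr)$, the cutoff becomes $r<1$ for every $\varepsilon$. The paper then:
\begin{itemize}
\item confines $\{|\mb y|<\delta\}$ to $r<g(\delta)=(\delta/c_1)^{1/K_1}$, using $|r^Q\mb x|\ge c_1r^{K_1}$;
\item splits $r$ dyadically and pulls out $c_2(g(\delta)2^{-n+1})^{K_2}$ by \eqref{eq: norm_exp_Q};
\item bounds each piece by the $\mu_{\varepsilon'}$-integral of $|\mb y|$, with $\varepsilon'=\varepsilon g(\delta)2^{-n+1}$, over the single fixed set $\{r^Q\mb x:\ 1/2<r<1,\ \mb x\in\Sd\}$.
\end{itemize}
That set is bounded away from $\mb 0$, so by the vague convergence these integrals stay bounded uniformly in small $\varepsilon'$.

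If you want to keep your $S_Q$-shells, move the reference shell below the cutoff instead. Set $\tau_{\min}=\inf_{\Sd}\tau$; then $\tau_{\min}\ge c_2^{-1/K_2}>0$, because $1=|\tau(\mb x)^Q\mb u(\mb x)|\le c_2\tau(\mb x)^{K_2}$. Take $a_0<\tau_{\min}/2$, $R_*^{(a_0)}=A_{a_0,S_Q}\setminus A_{2a_0,S_Q}$, and $\delta$ so small that $\eta(\delta)<\tau_{\min}$. Then the cutoff is inactive on every $R_j$ and on $R_*^{(a_0)}$. You get exactly $\mu_\varepsilon(R_j)=\mu_{\varepsilon'}(R_*^{(a_0)})$ with $\varepsilon'=\varepsilon\eta 2^{-j-1}/a_0$, and your geometric-series estimate goes through unchanged.
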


\begin{proof}
    Note the assumption above implies that $\mu_\varepsilon\overset{v}{\to} M$, where measure $M$ is given as in \eqref{eq: Lmeas_D_S}, see Lemma \ref{lem: vague_conv}. We will now show that 
    \begin{equation}\label{eq: mu_eps_tightness}
        \lim_{\delta \to 0} \limsup_{\varepsilon \to 0} \int_{|\mb x|<\delta}|\mb x|\mu_\varepsilon (d\mb x)=0.
    \end{equation}
    The idea of the proof is due to \cite[Prop. 1]{grabchak2024smalljumpslevyprocesses}.
    Rewrite now for some $\delta, \varepsilon>0$
    \begin{align*}
        \int_{|\mb x|<\delta}|\mb x|\mu_\varepsilon (d\mb x)=&\int_{\Sd} \int_0^{\varepsilon} |(r/\varepsilon)^Q\mb x| \mathds{1}_{B_\delta}((r/\varepsilon)^Q\mb x)  \varpi(d\mb x, dr)\\
        =&\int_{\Sd} \int_0^{1} |r^Q\mb x| \mathds{1}_{B_\delta}(r^Q\mb x)  \varpi(d\mb x, \varepsilon dr).
    \end{align*}

Using \eqref{eq: norm_exp_Q}, we get $|r^Q\mb x|\geq c_1r^{K_1}|\mb x|=c_1r^{K_1}$ for all $0\leq r\leq1$. Denote then $g(\delta)=(\delta/c_1)^{1/K_1}$ and notice that
\begin{align*}
        \int_{|\mb x|<\delta}|\mb x|\mu_\varepsilon (d\mb x)\leq& \int_{\Sd} \int_0^{g(\delta)} |r^Q\mb x| \varpi(d\mb x, \varepsilon dr)\\
        =&\sum_{n=1}^\infty \int_{\Sd} \int_{g(\delta)2^{-n}}^{g(\delta)2^{-n+1}} |r^Q\mb x| \varpi(d\mb x, \varepsilon dr)\\
        =&\sum_{n=1}^\infty \int_{\Sd} \int_{1/2 }^{1} \left |\left(\frac {rg(\delta)} {2^{n-1}} \right)^Q\mb x\right | \varpi\left(d\mb x, \varepsilon \frac {g(\delta)} {2^{n-1}} dr\right)\\
        \leq &\sum_{n=1}^\infty c_2 \left( \frac {g(\delta)} {2^{n-1}} \right)^{K_2} \int_{\Sd} \int_{1/2 }^{1} \left |r^Q\mb x\right | \varpi\left(d\mb x, \varepsilon \frac {g(\delta)} {2^{n-1}} dr\right),
    \end{align*}
    where we applied \eqref{eq: norm_exp_Q} with $s=g(\delta)/2^{n-1}<1$, assuming that $\delta$ is small enough.
    Notice now that 
    \begin{equation*}
        \int_{\Sd} \int_{1/2 }^{1} \left |r^Q\mb x\right | \varpi\left(d\mb x, \varepsilon \frac {g(\delta)} {2^{n-1}} dr\right)=\int_{\Rd}|\mb y| \mathds 1_{A_{1/2}^1}(\mb y)\mu_{\varepsilon \frac {g(\delta)} {2^{n-1}}}(d\mb y),
    \end{equation*}
    where $A_a^b=\{\mb y\in \Rd : \mb y=r^Q\mb x, r\in (a,b),\mb x\in \Sd\}$, $0\leq a<b<\infty$.
It follows from the vague convergence $\mu_\varepsilon\overset{v}{\to} M$, $\varepsilon\to 0$, that the last integral converges as $\varepsilon\to 0$ to 
\begin{equation*}
     \int_{\Rd}|\mb y| \mathds 1_{A_{1/2}^1}(\mb y) M(d\mb y)=m^*<\infty,
\end{equation*}
see also \cite[Thm. 1]{barczy2006portmanteau}.
Consequently, for any $\zeta>0$ there exists $\varepsilon^*>0$ such that for every $0<\varepsilon<\varepsilon^*$, it holds that
    \begin{equation*}
   \int_{\Rd}|\mb y| \mathds 1_{A_{1/2}^1}(\mb y)\mu_{\varepsilon \frac {g(\delta)} {2^{n-1}}}(d\mb y)< m^*+\zeta.
    \end{equation*}

The last estimate holds for all $n\geq 1$ and for $\delta$ small enough; indeed, since function $g(\delta)$ decreases to $0$ monotonically as $\delta\to 0$ it follows that $\varepsilon \frac {g(\delta)} {2^{n-1}}<\varepsilon<\varepsilon^*$ for $\delta$ small.
Consequently, we can deduce that for $\delta$ small, it holds that
\begin{align*}
        \int_{|\mb x|<\delta}|\mb x|\mu_\varepsilon (d\mb x)
        \leq c_2 (g(\delta))^{K_2}\sum_{n=1}^\infty 2^{-K_2(n-1)}  (m^*+\zeta).
    \end{align*}
We can now conclude that \eqref{eq: mu_eps_tightness} holds, which ends the proof by applying Theorem \ref{thm: convergence}.
\end{proof}

The next corollary provides a tractable example of the possible measure $\mu$ (and therefore corresponding $ID$ distribution) satisfying the conditions of Lemma \ref{lem:  ID_convergence}. It is a direct generalisation of \cite[Cor. 1]{grabchak2024smalljumpslevyprocesses}.
\begin{corollary}\label{cor: tr_conv}
    Consider $\varrho\in ID(\Rd)$ and assume that its L\'evy measure $\mu$, satisfying \eqref{eq: mu_Id_Levycond}, can be given by
        \begin{equation*}
        \mu(B)=\int_{\Sd} \int_0^\infty \mathds{1}_B\{r^Q\mb x\} \rho(\mb x, r) \sigma(d\mb x)dr,\quad B\in \mc B(\Rd),
    \end{equation*}
   i.e. assume that $\varpi(d\mb x,dr)=\rho(\mb x, r) \sigma(d\mb x)dr$ in \eqref{eq: ID_mu} for some function $\rho:\Sd\times (0,\infty)\mapsto[0,\infty)$ and a sigma-finite measure $\sigma$ on $\Sd$. 
    
     Assume also that for some $h: \Sd \mapsto  \R_+$, it holds that 
     \begin{gather*}
         \int\limits_{\Sd}h(\mb x)\sigma(d\mb x)=b<\infty \quad \text{and}\quad \lim\limits_{r\downarrow0}\int\limits_{\Sd}|r\rho(r,\mb x)-h(\mb x)|\sigma(d\mb x)=0.
     \end{gather*} 
     For simplicity, assume that $b=1$.
        Then we have
    \begin{equation*}
        \varrho_\varepsilon\overset{w}{\to} \mc D(Q, \tilde{\sigma}), \quad \varepsilon\to 0,
    \end{equation*}
    where  $\tilde{\sigma}(B)=\int\limits_B h(\mb x)\sigma(d\mb x)$, $B\in \mc B( \Sd)$, and $\varrho_\varepsilon$ is obtained from $\varrho$ by the transformation described above.
\end{corollary}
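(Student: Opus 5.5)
The plan is to apply Lemma \ref{lem:  ID_convergence} with $\tilde\sigma$ in place of $\sigma$; note $\tilde\sigma$ is a probability measure on $\Sd$ because $b=1$. So the task reduces to checking that for every $t>0$ and every $D\in\mc B(S_Q)$ with $\tilde\sigma(\partial D)=0$,
\begin{equation*}
\mu_\varepsilon(A_{t,D})\to -\log(1\wedge t)\,\tilde\sigma(D),\quad \varepsilon\to 0 .
\end{equation*}
In this plan $\rho(\mb x,r)$ and $\rho(r,\mb x)$ denote the same density; I read the statement's notation this way.

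\emph{Step 1: rewrite $\mu_\varepsilon(A_{t,D})$ using the polar map.} By \eqref{eq: mu_eps_def},
\begin{equation*}
\mu_\varepsilon(A_{t,D})=\int_{\Sd}\int_0^\varepsilon \mathds 1_{A_{t,D}}((r/\varepsilon)^Q\mb x)\rho(\mb x,r)\,dr\,\sigma(d\mb x).
\end{equation*}
Here I need $(r/\varepsilon)^Q\mb x\in A_{t,D}$ to be equivalent to $r/\varepsilon>t$ and $\mb x\in D$. This is where $\xi_Q$ is used. Since $\xi_Q$ is a bijection from $\bar\R_+\times S_Q$, the representation $\mb y=s^Q\mb x$ with $\mb x\in S_Q$ is unique. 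Strictly, points $\mb x\in\Sd\setminus S_Q$ also have $s^Q\mb x$ lying in some $A_{t',D'}$. I would handle this by assuming, as in \cite{jurek1982structure} and as implicitly done in Lemma \ref{lem: vague_conv} and \eqref{eq: Lmeas_D_S}, that $\sigma$ is concentrated on $S_Q$. Alternatively, one can work with the unique $S_Q$-polar coordinates throughout, which is consistent with how $M$ in \eqref{eq: Lmeas_D_S} is evaluated on $A_{t,D}$. This bookkeeping is the main subtlety. With it in place,
\begin{equation*}
\mu_\varepsilon(A_{t,D})=\int_D\int_{t\varepsilon}^{\varepsilon}\rho(\mb x,r)\,dr\,\sigma(d\mb x)
\end{equation*}
for $t<1$, and $\mu_\varepsilon(A_{t,D})=0$ for $t\geq1$, which matches $-\log(1\wedge t)=0$.

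\emph{Step 2: pass to the limit for $t<1$.} Substitute $r=\varepsilon u$ and write $r\rho(\mb x,r)=h(\mb x)+e(\mb x,r)$. This gives
\begin{equation*}
\mu_\varepsilon(A_{t,D})=\int_t^1\frac{du}{u}\int_D h\,d\sigma+\int_t^1\int_D e(\mb x,\varepsilon u)\,\sigma(d\mb x)\frac{du}{u}.
\end{equation*}
The first term equals $-\log t\,\tilde\sigma(D)$. The error term is bounded by
\begin{equation*}
(-\log t)\sup_{0<r\le\varepsilon}\int_{\Sd}|r\rho(\mb x,r)-h(\mb x)|\,\sigma(d\mb x),
\end{equation*}
which tends to $0$ by the hypothesis. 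Fubini applies because everything is nonnegative or absolutely integrable on $[t,1]$ for small $\varepsilon$.

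\emph{Step 3: conclude.} Lemma \ref{lem:  ID_convergence} yields $\varrho_\varepsilon\overset{w}{\to}\mc D(Q,\tilde\sigma)$. The integrability condition \eqref{eq: mu_Id_Levycond} is assumed in the statement, so $\varrho_\varepsilon$ is well defined and that lemma applies.

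The only real obstacle is the identification in Step 1 of the sets $\{(r/\varepsilon)^Q\mb x\in A_{t,D}\}$, namely the uniqueness of the generalised polar decomposition. I would first try to settle it by restricting to $\sigma$ supported on $S_Q$. If $S_Q\neq\Sd$, I would instead reparametrise $\mu$ via $\xi_Q^{-1}$.
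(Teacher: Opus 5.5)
Your proposal is correct and is essentially the argument the paper intends. The paper gives no separate proof, presenting the corollary as a direct generalisation of \cite[Cor.~1]{grabchak2024smalljumpslevyprocesses}. Its content is precisely the verification of the hypothesis of Lemma~\ref{lem:  ID_convergence} via the substitution $r=\varepsilon u$ and the $L^1(\sigma)$-convergence $r\rho(\cdot,r)\to h$, exactly as in your Steps~1--2.

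The concentration of $\sigma$ on $S_Q$ that you flag is tacitly assumed by the paper as well: it already underlies the claimed equivalence of vague convergence with \eqref{eq: lem_converg_cond} via Lemma~\ref{lem: vague_conv}. If you want to drop it, reparametrising by $\xi_Q^{-1}$ will not reproduce the literal hypothesis of Lemma~\ref{lem:  ID_convergence}. What works instead is checking $\mu_\varepsilon\overset{v}{\to}M$ and \eqref{eq: lem_converg_tightness_cond} directly. By \eqref{eq: norm_exp_Q}, a bounded continuous $f$ vanishing on $B_\delta$ only sees $u\ge u_0=(\delta/c_2)^{1/K_2}$, so that $\bigl|\int f\,d\mu_\varepsilon-\int f\,dM\bigr|\le\|f\|_\infty\log^+(1/u_0)\sup_{0<r\le\varepsilon}\int_{\Sd}|r\rho(\mb x,r)-h(\mb x)|\,\sigma(d\mb x)$. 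Tightness follows likewise from \eqref{eq: norm_exp_Q} and $\int_{\Sd}r\rho(\mb x,r)\,\sigma(d\mb x)\to1$. These two facts are all that the proof of Theorem~\ref{thm: convergence} uses.
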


Note that we can also obtain the following simple corollary from Theorem \ref{thm: convergence}, which is an extension of Proposition \ref{prop: conv}\ref{prop: conv: iii}.

\begin{corollary}\label{cor: inf_conv}
    Assume that $\mb X_1, \mb X_2,\ldots $ is a sequence of random vectors such that $\mc L(\mb X_k)=\mc D(\frac 1 {w_k} Q,\nu_k),$ $k\geq 1$ with $\nu_k\in H$ and $w_k>0$, $\sum_{k=1}^\infty w_k=w<\infty$. Then the series $\sum_{k=1}^n \mb X_k$ converges weakly as $n\to \infty$ to $\mb X$ with $\mc L(\mb X)=\mc D(\frac 1 w Q,\tilde{\nu})$, where $\tilde{\nu}(\cdot)=\frac 1 w  \sum_{k=1}^\infty w_k\nu_k(\cdot)$.
\end{corollary}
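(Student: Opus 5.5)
The plan is to work with characteristic functions and the scaling identity from the proof of Proposition \ref{prop: conv}\ref{prop: conv: iii}. By Proposition \ref{prop: MD_class_cf}, for each $k$ and every $\mb z\in\Rd$,
\begin{equation*}
\log \widehat{\mc L(\mb X_k)}(\mb z)=w_k\int\limits_{\Rd}\int\limits_0^1\left(e^{i\mb z\cdot s^Q\mb x}-1\right)\nu_k(d\mb x)\frac{ds}{s}.
\end{equation*}
Here we assume, as the statement implicitly does, that the $\mb X_k$ are independent. The characteristic function of $\sum_{k=1}^n\mb X_k$ is then the product of these, and its logarithm equals $\int_{\Rd}\Phi_{\mb z}(\mb x)\,\nu^{(n)}(d\mb x)$. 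In this expression $\nu^{(n)}=\sum_{k=1}^n w_k\nu_k$, and
\begin{equation*}
\Phi_{\mb z}(\mb x)=\int_0^1\left(e^{i\mb z\cdot s^Q\mb x}-1\right)\frac{ds}{s}.
\end{equation*}
By Proposition \ref{prop: conv}\ref{prop: conv: iii}, this sum has law $\mc D(\frac{1}{w^{(n)}}Q,\nu^{(n)}/w^{(n)})$, where $w^{(n)}=\sum_{k\le n}w_k$.

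The key step is to pass to the limit $n\to\infty$ in $\int\Phi_{\mb z}\,d\nu^{(n)}$ and obtain $\int\Phi_{\mb z}\,d(w\tilde\nu)$, with $w\tilde\nu=\sum_k w_k\nu_k$. Since the integrand does not depend on $n$, this reduces to summability of $\sum_k w_k\int|\Phi_{\mb z}|\,d\nu_k$. Since $|e^{it}-1|\le 2(|t|\wedge1)$, we have $|\Phi_{\mb z}(\mb x)|\le 2(|\mb z|\vee1)P_1(\mb x)$, and Lemma \ref{lem1}\ref{lem1:i} bounds this by $2(|\mb z|\vee1)B_1\log(1+|\mb x|)$. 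Thus it suffices that $\sum_k w_k\int\log(1+|\mb x|)\,\nu_k(d\mb x)<\infty$, which is exactly the condition $\tilde\nu\in H$. This is the main obstacle: the hypothesis $\nu_k\in H$ for each $k$ does not by itself give $\tilde\nu\in H$. I would make this assumption explicit, since it is needed anyway for $\mc D(\frac1wQ,\tilde\nu)$ to be defined. With it, dominated convergence (Fubini over the series) gives pointwise convergence of the characteristic functions to the expression \eqref{eq:opMD_CF} for $\mc D(\frac1wQ,\tilde\nu)$, after the change of variables $s\mapsto s^{w}$ that converts the factor $w$ into the operator $\frac1wQ$. Lévy's continuity theorem then yields the weak convergence.

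Alternatively, and to match the remark that the result follows from Theorem \ref{thm: convergence}, one can argue with Lévy measures. By Corollary \ref{cor: ID_opMD}, each partial sum has Lévy measure $M_n(B)=\int\int_0^1\mathds 1_B(s^Q\mb x)\,\nu^{(n)}(d\mb x)\frac{ds}{s}$. These measures increase monotonically to the Lévy measure $M$ of the limit, so vague convergence and the tightness condition are immediate from monotone convergence together with the integrability bound of Lemma \ref{lem1}. The drift condition follows in the same way from the bound on $|\mb a|$ in that corollary. The characteristic-function route is shorter, however, and it is the one I would write out.
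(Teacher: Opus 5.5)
Your proof is correct, and it takes a different and more careful route than the paper's.

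\textbf{How the two routes differ.} The paper argues at the level of L\'evy measures. It notes that the normalised mixing measures $(w_1\nu_1+\cdots+w_k\nu_k)/w$ increase setwise to $\tilde\nu$ on subsets $D\subset\Sd$. Hence the L\'evy measures of the partial sums converge on the sets $A_{t,D}$, and it then invokes Theorem \ref{thm: convergence}. You instead pass to the limit directly in the exponent of the characteristic function. Your ingredients are the $k$-independent dominating function $2(|\mb z|\vee 1)B_1\log(1+|\mb x|)$ from Lemma \ref{lem1}\ref{lem1:i}, the rescaling identity from Proposition \ref{prop: conv}\ref{prop: conv: iii}, and L\'evy's continuity theorem.

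\textbf{What each buys.} Your route is self-contained and works for arbitrary $\nu_k\in H$. Theorem \ref{thm: convergence} is formulated only for limits $\mc D(Q,\sigma)$ with $\sigma\in\mc S$. It also relies on Lemma \ref{lem: vague_conv} and on checking \eqref{eq: lem_converg_tightness_cond}, which the paper's proof leaves implicit. So the paper's argument, as written, really covers the spherical case. The paper's route buys consistency with the framework of Section \ref{sec:appl}, where monotone convergence of the mixing measures is the only thing to verify. Your sketched alternative via $M_n\uparrow M$ is essentially that argument made complete: the tightness condition follows from $M_n\le M$ together with $\int_{|\mb y|<1}|\mb y|\,M(d\mb y)<\infty$.

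\textbf{The missing hypotheses.} You are right that the statement silently needs two things the paper's proof does not address. The first is independence of the $\mb X_k$. The second is $\tilde\nu\in H$, i.e.\ $\sum_k w_k\int\log(1+|\mb x|)\,\nu_k(d\mb x)<\infty$; this is automatic when all $\nu_k\in\mc S$.

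The second hypothesis is in fact necessary, not merely needed to make $\mc D(\frac1wQ,\tilde\nu)$ meaningful. By the lower bound in \eqref{eq: norm_exp_Q}, the L\'evy measure $M_n$ of the $n$-th partial sum satisfies $M_n(\{|\mb y|>1\})\ge K_1^{-1}\sum_{k\le n}w_k\int\log^+(c_1|\mb x|)\,\nu_k(d\mb x)$. This tends to infinity when $\tilde\nu\notin H$. That is incompatible with weak convergence of the partial sums, by the convergence criterion for infinitely divisible laws used in the proof of Theorem \ref{thm: convergence}.
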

\begin{proof}
    The proof is analogous to the proof of Proposition \ref{prop: conv}\ref{prop: conv: iii} if we notice that 
    \begin{equation*}
         \frac {w_1\nu_1+\cdots+ w_k\nu_k} {w}(D)\begin{cases}
             \leq \Tilde{\nu}(D), & k\in \mathbb N,\\
            \to \Tilde{\nu}(D) &  \text{as}\; k\to \infty.
         \end{cases}
    \end{equation*} 
    for all $D\subset \Sd$, and use Theorem \ref{thm: convergence}.
\end{proof}

\section{Simulations from the operator Dickman distribution}\label{sec: simul}

The random sample from the operator Dickman distribution can be simulated using the algorithm presented at the end of this section, where we use the representation \eqref{eq: opMDas_inf_sum}. Moreover, the sum in \eqref{eq: opMDas_inf_sum} is truncated at $N$-th term, where  $N=N_{max}\wedge K$ and $K=\inf\{k\in \mathbb{N}: \|(U_1\cdot\ldots\cdot U_k)^Q\|_\infty<\epsilon\}$ for some fixed thresholds $\epsilon>0$ and $N_{max} \in \mathbb{N}$. Here $\|Q\|_\infty$ denotes the largest absolute value among the entries of $Q\in \mc M_+$.

\begin{figure}[ht]
\centering
\includegraphics[width=0.31\linewidth,page=1]{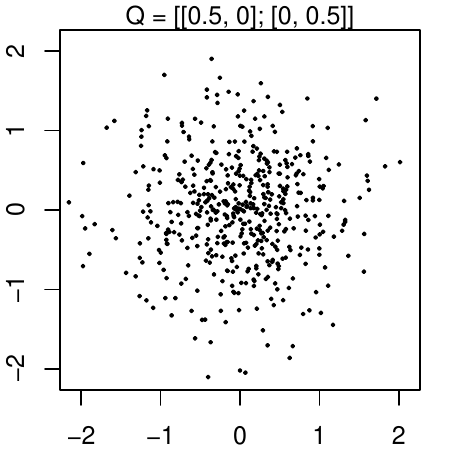}
\includegraphics[width=0.31\linewidth,page=2]{pictures/dickman-sph.pdf}
\includegraphics[width=0.31\linewidth,page=3]{pictures/dickman-sph.pdf}\\
\includegraphics[width=0.31\linewidth,page=4]{pictures/dickman-sph.pdf}
\includegraphics[width=0.31\linewidth,page=5]{pictures/dickman-sph.pdf}
\caption{The random samples of size 500 from the 2-dimensional operator Dickman distribution $\mc D(Q,\nu)$
with $\nu$ being the uniform distribution on the $\mathbb{S}^1$ and various~$Q$.}
\label{fig1}
\end{figure}

In Figure \ref{fig1}, we show random samples of size 500 from 
the 2-dimensional operator Dickman distribution $\mc D(Q, \nu)$ with different operators $Q\in \mc M_+$ and $\nu$ being the uniform distribution on $\mathbb{S}^1$.
The pictures demonstrate that the distribution $\mc D(Q, \nu)$ is rotationally invariant if $Q\in \mbb I$ and that the covariance matrix of a random vector from the $\mc D(Q, \nu)$ distribution depends on the matrix $Q$, cf. Proposition \ref{prop:mom_cov} and Proposition \ref{prop: conv} .

\begin{figure}[ht]
\includegraphics[width=0.31\linewidth,page=1]{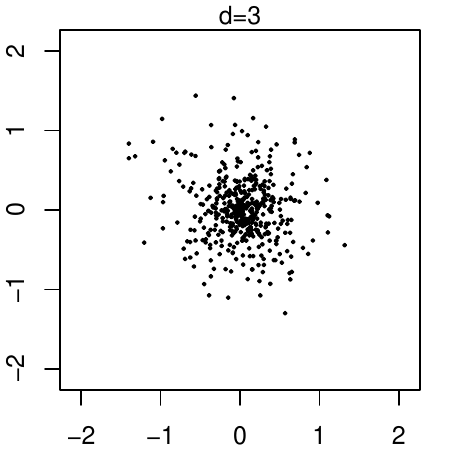}
\includegraphics[width=0.31\linewidth,page=2]{pictures/dickman-sph-d.pdf}
\includegraphics[width=0.31\linewidth,page=3]{pictures/dickman-sph-d.pdf}
\caption{The first and second coordinates of random samples of size 500 from the operator Dickman distribution $\mc D(I, \nu)$ on $\Rd$, where $\nu$
is the uniform distribution on the unit sphere $\Sd$, for $d=3,6,9$.}
\label{fig2}
\end{figure}

In Figure \ref{fig2}, we depict the first and second coordinates of 500
random $\Rd$-valued vectors sampled from $\mc D(I, \nu)$, 
$\nu$ being the uniform distribution on the unit sphere $\Sd$ for $d=3,6,9$.
We can see that the coordinates of the random samples are concentrated closer to the origin for bigger values of $d$.

Recall now that the von Mises distribution $\nu_{\kappa, \mu }$ on $\mathbb{S}^1$ is given as the distribution of the vector $(\cos\theta, \sin \theta)$ with the distribution of $\theta\in [0, 2\pi)$ given via the probability density function $f_{\kappa, \mu}(x)$
\begin{equation*}
    f_{\kappa, \mu}(x)=\frac{ \exp(\kappa \cos(x- \mu))}{2\pi I_0(\kappa)},\quad x \in [0,2\pi). 
\end{equation*}
Here $I_0(\cdot)$ is the modified Bessel function of the first kind of order $0$, and $\kappa>0$, $\mu \in [0,2\pi)$ are the parameters. 
\begin{figure}[ht]
\centering
\includegraphics[width=0.31\linewidth,page=1]{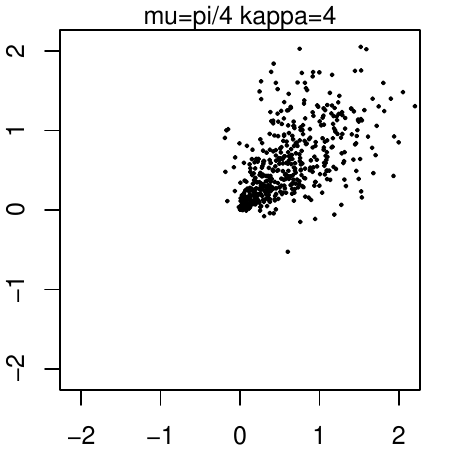}
\includegraphics[width=0.31\linewidth,page=2]{pictures/dickman-vonmises.pdf}
\includegraphics[width=0.31\linewidth,page=3]{pictures/dickman-vonmises.pdf}
\caption{The random samples of size 500 from the $2$-dimensional Dickman distribution $\mc D(I, \nu_{\mu,\kappa})$
with various values of parameters $\mu$ and $\kappa$.}
\label{fig3}
\end{figure}

In Figure \ref{fig3}, we show the random samples of size 500 from the $2$-dimensional Dickman distribution $\mc D(I,\nu_{\kappa,\mu})$ 
with various values of parameters $\mu$ and $\kappa$. We can see that the points are distributed 
along the direction specified by the angle  $\mu$, and 
the dispersion of points around this direction depends on $\kappa$.

\begin{algorithm}[ht]
\caption{Sampling from the operator Dickman distribution.}
\begin{algorithmic}[1]
\Require $n$, $Q \in \mc M_+$, distribution $\nu$ on $\Rd$, tolerance $\varepsilon > 0$, cap $N_{\max}$
\Ensure $ X^{(1)}, \dots,  X^{(n)} \in \mathbb{R}^d$

\Function{rDickman}{$n,Q,\epsilon,N_{\max}$}
    \For{$s=1$ to $n$}
        \State $x \gets 0_d$, $M \gets I_d$
        \Repeat
           \State $u \sim U[0,1]$
            \State $M \gets$ $e^{Q\log u}\cdot M$
            \State $x \gets x + MW$, with $W \sim \nu$ 
            \State $terms \gets terms+1$
        \Until{$\|M\|_\infty < \epsilon$ or terms $=N_{\max}$}
        \State $X^{(s)} \gets x$
    \EndFor
    \State \Return $\{X^{(s)}\}$
\EndFunction

\end{algorithmic}
\end{algorithm}

\begin{center}
    \textbf{Acknowledgements}
\end{center}
Nikolai Leonenko (NL) would like to thank for support the ARC Discovery Grant DP220101680 (Australia), Croatian Scientific Foundation (HRZZ) grant “Scaling in Stochastic Models” (IP-2022-10-8081), grant FAPESP 22/09201-8 (Brazil) and the Taith Research Mobility grant (Wales, Cardiff University).

\bibliography{literature} 
\end{document}